\newtheorem{theorem}{Theorem}[section]
\newtheorem{proposition}[theorem]{Proposition}
\newtheorem{lemma}[theorem]{Lemma}
\newtheorem{corollary}[theorem]{Corollary}
\theoremstyle{definition}
\newtheorem{definition}[theorem]{Definition}
\newtheorem{example}[theorem]{Example}
\newtheorem{remark}[theorem]{Remark}
\newtheorem{thmx}{Theorem}
\newtheorem{corx}[thmx]{Corollary}
\DeclareMathOperator{\C}{\mathcal{C}}
\DeclareMathOperator{\A}{\mathcal{A}}
\DeclareMathOperator{\Aut}{Aut}
\DeclareMathOperator{\Cay}{Cay}
\DeclareMathOperator{\Sym}{Sym}
\DeclareMathOperator{\link}{link}
\let\star\relax
\DeclareMathOperator{\star}{star}
\DeclareMathOperator{\id}{id}
\DeclareMathOperator{\stab}{stab}
\renewcommand*{\@textcolor}[3]{%
  \protect\leavevmode
  \begingroup
    \color#1{#2}#3%
  \endgroup
}
\title[Automorphism groups of Cayley graphs of Coxeter groups]{On the non-discreteness of automorphism groups of Cayley graphs of Coxeter groups}
\author{Federico Berlai}
\address[Federico Berlai]{Department of Mathematics, University of the Basque Country UPV/EHU, Barrio Sarriena s/n, 48940 Leioa, Spain}
\email[Federico Berlai]{federico.berlai@gmail.com}
\author{Michal Ferov}
\address[Michal Ferov]{School of Information and Physical Sciences, University of Newcastle, University Drive, NSW 2308, Australia}
\email[Michal Ferov]{michal.ferov@gmail.com}
\date{\today}
\begin{document}

\maketitle
\begin{abstract}
In this work we characterise Cayley graphs of Coxeter groups with respect to the standard generating set that admit uncountable vertex stabilisers. As a corollary, we fully identify finitely generated Coxeter groups for which the automorphism group of their Cayley graph with respect to the standard generating set is not discrete when equipped with the permutation topology. As an application, we also provide new explicit constructions of vertex-transitive graphs of infinite degree that have locally compact automorphism groups.
\end{abstract}

\tableofcontents

\section{Introduction}
\label{sec:introduction}
An automorphism group of a connected graph $\Delta=(V\Delta,E\Delta)$ naturally admits a topology, namely the pointwise-convergence topology, also called the permutation topology, whose base of open sets at the identity is given by the family
\begin{displaymath}U_F:=\bigl\{ \varphi\in \Aut(\Delta)\mid \varphi(x)=x\ \forall x\in F\bigr\},\end{displaymath}
where \(F \subseteq V\Delta\) is finite. That is, basic open sets at the identity are finite intersections of vertex stabilisers. This topology is totally disconnected. Whenever the graph $\Delta$ is locally finite, it is also locally compact, so the group $\Aut(\Delta)$ is a totally disconnected locally compact (TDLC) topological group. For more details on permutation topologies, we refer the reader to Subsection \ref{subsec:permutation topologies}.

Being locally compact or not, this topology is second countable whenever the vertex set of the graph $\Delta$ is at most countable, so that in this case the group $\Aut(\Delta)$ is discrete if and only if it is countable. 

A natural problem, thus, is to find conditions on a countable graph $\Delta$ that assure this topology on $\Aut(\Delta)$ to be non-discrete (or, equivalently, that assure that the group $\Aut(\Delta)$ is not countable). This is a well-known problem that attracted attention from several branches of mathematics. For instance, we must mention the seminal paper \cite{Halin} of Halin that provides a graph theoretical characterisation of uncountability. Nevertheless, stronger forms of Halin's result were already known among (and published by) logicians (compare also \cite{ImSm}).

\smallskip
In this work we address the above-mentioned problem for a particular family of graphs, that is, Cayley graphs of Coxeter groups (we redirect the interested reader to Subsection~\ref{subsection_Coxeter} for the precise definitions and notation that will be used throughout this paper). In Theorem~\ref{thmA} we fully characterise, in terms of the defining graph $\Gamma$, those countably generated Coxeter groups $W_\Gamma$ for which $\Aut\bigl(\Cay(W_\Gamma,S)\bigr)$ has uncountable vertex stabilisers.

If $G$ is a group and $S$ is a generating set for $G$, then Cayley's theorem asserts that $G$ is a subgroup of $\Aut\bigl(\Cay(G,S)\bigr)$, and in particular it is the subgroup consisting of label-preserving automorphisms of the labelled graph $\Cay(G,S)$.

It is worth mentioning that recently results with an opposite flavour (compared to ours) appeared in the literature. Indeed, it has been proved in \cite[Theorem 7]{LdlS20} (see also \cite[Theorem 1.1]{LdlS}) that for any finitely generated group $G$ which is not abelian nor generalised dicyclic, if $S_0$ is a finite symmetric generating set for $G$ then $G\cong \Aut\bigl(\Cay(G,S)\bigr)$, where $S$ is the new generating set $(S_0\cup S_0^2\cup S_0^3)\setminus \{e\}$. Thus, the results from \cite{LdlS20,LdlS} mean that is it is always possible to find generating sets so that the Cayley graph has as few automorphisms as possible.

The impulse that led us to prove Theorem~\ref{thmA} came from~\cite{T}. Indeed, in~\cite{T} Taylor considers the similar question for Cayley graphs of finitely generated right-angled Artin groups with respect to the standard generating set, and proves that their automorphism group is countable if and only if the defining graph is complete, that is if and only if the corresponding right-angled Artin group is a finitely generated free abelian group.

\smallskip
To be able to state Theorem \ref{thmA} we will need to introduce now some notation.
Given an simplicial graph $\Gamma=(V\Gamma,E\Gamma)$, not necessarily finite, and a \emph{weight} function $m\colon V\Gamma\times V\Gamma\to \{1,2,\dots\}\cup\{+\infty\}$ satisfying
\begin{enumerate}
    \item[$c_1)$] $m(x,y)=m(y,x)$ for all $x,y\in V\Gamma$;
    \item[$c_2)$] $m(x,y)=1$ if and only if $x=y$;
    \item[$c_3)$] $m(x,y)=+\infty$ if and only if $\{x,y\}\notin E\Gamma$,
\end{enumerate}
we can consider the Coxeter group (notice, it will be finitely generated exactly when the graph $\Gamma$ is finite)
\begin{equation}\label{equation_Coxeter}
W_\Gamma:=\langle  V\Gamma\mid (xy)^{m(x,y)}=e\quad \forall\, x,y\in V\Gamma\rangle,
\end{equation}
where $(xy)^{+\infty}=e$ by definition means that there is no relation between the two generators $x$ and $y$. We call $\Gamma=(V\Gamma,E\Gamma, m)$ a weighted graph.
Notice that the Coxeter group $W_\Gamma$ is a right-angled Coxeter group whenever the weight function $m$ only takes values in $\{1,2,+\infty\}$. In this case, we usually forget about this function and plainly consider simplicial graphs $\Gamma=(V\Gamma,E\Gamma)$: for vertices $x\neq y$, we have that $m(x,y)=2$ if and only if $\{x,y\}\in E\Gamma$, and $m(x,y)=+\infty$ if and only if $\{x,y\}\notin E\Gamma$.

Let us denote by $\A_\Gamma$ the automorphism group of the Cayley graph of $W_\Gamma$ with respect to the standard generating set $V\Gamma$. Moreover, let $\Aut(\Gamma)$ denote the group of symmetries of $\Gamma$ that preserve weights (in particular, if we reduce our attention to right-angled Coxeter groups, then this is the automorphism group of the unweighted defining graph). Then we have the following statement.

\begin{thmx}\label{thmA}
\emph{Let $\Gamma$ be a countable weighted graph and let $\C_\Gamma$ be the Cayley graph of the Coxeter group $W_\Gamma$ with respect to the standard generating set $V\Gamma$. The automorphism group $\A_\Gamma$ has uncountable vertex stabilisers if and only if either $\Aut(\Gamma)$ is uncountable or there exists $x\in V\Gamma$ and a non-trivial automorphism $\alpha\in\Aut(\Gamma)$ such that $\alpha\restriction_{\star(x)}=\id_\Gamma$.
}
\end{thmx}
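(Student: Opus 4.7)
The plan is to prove both implications by analysing the local permutations $\sigma_w \in \Sym(V\Gamma)$ that an automorphism $\varphi \in \stab_{\A_\Gamma}(e)$ induces at each vertex $w \in W_\Gamma$, namely $\sigma_w(s) = \varphi(w)^{-1}\varphi(ws)$. Two observations underlie everything: first, since $\varphi$ preserves the $2$-ball around $w$, whose combinatorics encode the weights $m(s,t)$, each $\sigma_w$ lies in $\Aut(\Gamma)$; second, comparing the images of the back-edge $ws \to w$ and of the commutation $4$-cycle $(w, ws, wsu, wu)$ for $u \in \link(s)$ forces $\sigma_{ws}$ to agree with $\sigma_w$ on $\star(s)$, so the twist $\sigma_w^{-1}\sigma_{ws}$ lies in the subgroup $\Aut_s(\Gamma) := \{\beta \in \Aut(\Gamma) : \beta|_{\star(s)} = \id_\Gamma\}$.

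For the direction $(\Leftarrow)$, when $\Aut(\Gamma)$ is uncountable, any $\alpha \in \Aut(\Gamma)$ extends to a group automorphism $\tilde\alpha$ of $W_\Gamma$ by $s \mapsto \alpha(s)$ — the defining relations $(st)^{m(s,t)} = e$ are preserved because $m(\alpha(s), \alpha(t)) = m(s,t)$. Such $\tilde\alpha$ is a label-preserving graph automorphism of $\C_\Gamma$ fixing $e$, so $\Aut(\Gamma) \hookrightarrow \stab_{\A_\Gamma}(e)$ and the stabiliser is uncountable. When instead a non-trivial $\alpha$ satisfies $\alpha|_{\star(x)} = \id_\Gamma$ (so $\alpha \in \Aut_x(\Gamma)$), the plan is to build $2^{\aleph_0}$ distinct stabiliser elements by independently choosing, at each edge $w \to wx$ of $\C_\Gamma$ labelled by $x$, a twist $\sigma_{wx}\sigma_w^{-1} \in \{\id, \alpha\} \subseteq \Aut_x(\Gamma)$. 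The commutation $4$-cycles from $xu = ux$ (for $u \in \link(x)$) force the twist to be constant along $x$-edges inside each right $W_{\link(x)}$-orbit, so the choices are indexed by cosets in $W_\Gamma / W_{\star(x)}$ (of which there are infinitely many, since $\alpha$ moves some generator outside $\star(x)$), while the higher-order cycles coming from $(xt)^{m(x,t)} = e$ for $t \notin \star(x)$ impose parity conditions compatible with retaining infinitely many independent binary degrees of freedom. This yields $2^{\aleph_0}$ automorphisms $\varphi_T$ indexed by subsets $T$ of an infinite countable set.

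For $(\Rightarrow)$ I argue by contrapositive: assume $\Aut(\Gamma)$ is countable and $\Aut_x(\Gamma) = \{\id\}$ for every $x \in V\Gamma$. By the second observation the twist along every edge is trivial, so $\sigma_{ws} = \sigma_w$ for all $w, s$; inducting on the word length of $w$ gives $\sigma_w = \sigma_e$ for every $w$. Hence $\varphi$ coincides with the group automorphism $\widetilde{\sigma_e}$ extending $\sigma_e \in \Aut(\Gamma)$, giving an injection $\stab_{\A_\Gamma}(e) \hookrightarrow \Aut(\Gamma)$, and so $\stab_{\A_\Gamma}(e)$ is countable.

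The principal obstacle is the construction in the second case of $(\Leftarrow)$: making precise the space of consistent $\{\id, \alpha\}$-valued twist assignments on the $x$-edges of $\C_\Gamma$, and verifying that the constraints imposed by the commutation $4$-cycles and the non-commutation $2m(x,t)$-cycles still leave $2^{\aleph_0}$ independent choices. This will likely require a careful cell-by-cell analysis of $\C_\Gamma$, or equivalently of the Davis complex of $W_\Gamma$, and may also lean on preliminary graph-theoretic lemmas in the paper about uncountability of automorphism groups. A secondary but delicate point is the rigorous proof of the second observation, since concluding that $\sigma_{ws}$ agrees with $\sigma_w$ on all of $\star(s)$ (and not merely on $s$ itself) requires exploiting both the back-edge and every commutation $4$-cycle containing $w \to ws$.
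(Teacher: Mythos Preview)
Your overall strategy---encoding an automorphism $\varphi\in\stab_{\A_\Gamma}(e)$ by its family of local actions $(\sigma_w)_{w\in W_\Gamma}$, proving that $\sigma_w^{-1}\sigma_{wx}\in\Aut_x(\Gamma)$ for every edge, and then using this both to force rigidity in the absence of such nontrivial $\Aut_x(\Gamma)$ and to build uncountably many configurations when some $\Aut_x(\Gamma)\neq\{\id\}$---is precisely the paper's approach. Your $(\Rightarrow)$ argument is the content of Proposition~\ref{prop_countable}, and your second observation is Lemma~\ref{keylemma}. One small correction: for general Coxeter groups the cycle witnessing $\sigma_w(u)=\sigma_{ws}(u)$ for $u\in\link(s)$ is the relation $2m(s,u)$-cycle, not a commutation $4$-cycle; the argument goes through unchanged.

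Where you diverge from the paper is in the $(\Leftarrow)$ construction, and here you are making your life harder than necessary. You propose to parametrise automorphisms by assigning a twist in $\{\id,\alpha\}$ to each $x$-edge and then wrestling with the consistency constraints imposed by the cycles of $\C_\Gamma$. The paper instead parametrises the local-action configuration \emph{directly}: given the good separating set $S$ and the factor $\Gamma_1$ carrying the nontrivial $\nu\in\Aut(\Gamma_1)$ with $\nu\restriction_S=\id$, one sets, for an arbitrary subset $\chi\subseteq W_\Gamma/W_{\Gamma_1}$,
\[
\overline{\nu}_\chi(w)=\begin{cases}\nu'&\text{if }wW_{\Gamma_1}\in\chi,\\ \id_\Gamma&\text{otherwise,}\end{cases}
\]
and then checks the $(*)$-condition $\overline{\nu}_\chi(w)\restriction_{\star(y)}=\overline{\nu}_\chi(wy)\restriction_{\star(y)}$ in two lines: either $y\in V\Gamma_1$ and the coset does not change, or $y\notin V\Gamma_1$, in which case $\star(y)\subseteq V\Gamma_2$ and $\nu'$ is already the identity there. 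This yields $2^{\lvert W_\Gamma/W_{\Gamma_1}\rvert}$ legal configurations with no cycle-by-cycle analysis at all. In particular, your ``principal obstacle'' disappears: there are no higher-order cycles $(xt)^{m(x,t)}$ with $t\notin\star(x)$ to worry about, since $t\notin\star(x)$ means $m(x,t)=+\infty$ and hence no relation cycle exists; and the remaining cycles are handled automatically by making the configuration constant on $W_{\Gamma_1}$-cosets rather than trying to stitch together edge-twists. Your indexing by $W_\Gamma/W_{\star(x)}$ would also work, but the paper's choice of $W_{\Gamma_1}$ makes the verification of the $(*)$-condition immediate.
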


When the graph $\Gamma$ is finite, its group of weight-preserving symmetries $\Aut(\Gamma)$ will be finite as well, and in particular not uncountable. Notice that $\Gamma$ is finite if and only if
the Coxeter group $W_\Gamma$ is finitely generated, if and only if $\mathcal{C}_\Gamma$ is locally finite.

Thus, in that case (which is the case of interest for us), Theorem \ref{thmA} says that the TDLC group $\mathcal{A}_\Gamma$ has uncountable vertex stabilisers, and in particular it is itself uncountable, if and only if there exists a vertex $x$ in the finite graph $\Gamma$ that has some specific property.

Thus, we immediately deduce the following result for TDLC groups:
\begin{corx}\label{corA}
\emph{
Let $\Gamma$ be a finite weighted graph and let $\C_\Gamma$ be the Cayley graph of the Coxeter group $W_\Gamma$ with respect to the standard generating set $V\Gamma$. The group $\A_\Gamma$ is a non-discrete TDLC group if and only if there exists $x\in V\Gamma$ and a non-trivial automorphism $\alpha\in\Aut(\Gamma)$ such that $\alpha\restriction_{\star(x)}=\id_\Gamma$.
}
\end{corx}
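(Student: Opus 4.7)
The plan is to deduce Corollary~A directly from Theorem~A by exploiting the finiteness of $\Gamma$. Since $V\Gamma$ is finite, $\Aut(\Gamma)$ embeds in the finite symmetric group $\Sym(V\Gamma)$ and is therefore finite, so the first of the two disjuncts in the hypothesis of Theorem~A is automatically false. Theorem~A thus simplifies to the statement that $\A_\Gamma$ has uncountable vertex stabilisers if and only if there exist $x\in V\Gamma$ and a non-trivial $\alpha\in\Aut(\Gamma)$ with $\alpha\restriction_{\star(x)}=\id_\Gamma$.

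To convert ``uncountable vertex stabilisers'' into ``non-discrete TDLC group'' I would invoke the topological observations already recorded in Section~\ref{sec:introduction}. Finiteness of $\Gamma$ forces every vertex of $\C_\Gamma$ to have degree $|V\Gamma|<\infty$, so $\C_\Gamma$ is locally finite and $\A_\Gamma$ is a TDLC group. Moreover, the vertex set $V\C_\Gamma=W_\Gamma$ is countable, so the permutation topology on $\A_\Gamma$ is second countable and hence $\A_\Gamma$ is non-discrete if and only if it is uncountable.

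Finally, since $W_\Gamma\leq\A_\Gamma$ already acts transitively on $V\C_\Gamma$, so does $\A_\Gamma$, and for any vertex $v$ one has $|\A_\Gamma|=|W_\Gamma|\cdot|\stab_{\A_\Gamma}(v)|$. Because $|W_\Gamma|\leq\aleph_0$, the group $\A_\Gamma$ is uncountable if and only if $\stab_{\A_\Gamma}(v)$ is uncountable. Chaining this equivalence with the preceding one and with the specialised form of Theorem~A recovers Corollary~A. Every step is either immediate from the finite-$\Gamma$ hypothesis or already recorded in the introduction, so no genuine obstacle is expected; the finiteness of $\Gamma$ plays exactly the double role of eliminating the first disjunct of Theorem~A and of guaranteeing the TDLC property, while all real content resides in Theorem~A itself.
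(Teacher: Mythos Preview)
Your proposal is correct and follows essentially the same approach as the paper: the paper derives Corollary~A from Theorem~A by observing that finiteness of $\Gamma$ makes $\Aut(\Gamma)$ finite (eliminating the first disjunct), makes $\C_\Gamma$ locally finite (hence $\A_\Gamma$ is TDLC), and makes the permutation topology second countable (so non-discrete is equivalent to uncountable). Your orbit--stabiliser step making explicit why uncountability of $\A_\Gamma$ is equivalent to uncountability of a vertex stabiliser is a small elaboration, but the argument is otherwise the same.
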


From Theorem \ref{thmA} we immediately notice that the situation for (right-angled) Coxeter groups is diametrically opposed to the one of right-angled Artin groups: there exist graphs $\Gamma$ such that $\A_\Gamma$ is countable, and $\Gamma$ admits an induced subgraph $\Delta$ for which $\A_\Delta$ is uncountable. As an example, we can consider the graphs
\begin{equation*}
\begin{tikzpicture}
\draw[thick] 
(-1,0) --(0,0) --(1,0) --(2,0)

(-1,-1) --(0,-1);
\draw[fill] 
(-1,0) circle [radius=0.035] node[xshift=-.5cm]{$\Gamma$}
(0,0) circle [radius=0.035]
(1,0) circle [radius=0.035]
(2,0) circle [radius=0.035]

(-1,-1) circle [radius=0.035] node[xshift=-.5cm]{$\Delta$}
(0,-1) circle [radius=0.035]
(2,-1) circle [radius=0.035];
\end{tikzpicture}
\end{equation*}
Further, we also note that the Coxeter groups $W_\Gamma$ and $W_\Delta$ are quasi-isometric. This means that the property of having a non-discrete group of automorphism of the Cayley graph (with respect to the standard generating set) is not a quasi-isometry invariant within the class of Coxeter groups.

The paper is organised as follows. In Section \ref{section2} we introduce standard notation and definitions for graphs, Coxeter groups, permutation topologies, and provide the first results concerning the geometry of Cayley graphs of Coxeter groups, and their automorphisms.
In Section \ref{section3} and Section \ref{section4} we prove Theorem \ref{thmA}, each section concerned with one implication. We develop the notion of \emph{good separating set} in Definition \ref{goodseparatingset}. By Lemma \ref{char_good_sep_set}, the presence of a good separating set is equivalent to the condition appearing in Theorem \ref{thmA}. We conclude with Section \ref{section6}, where we show that the tools developed for the project can be also used to give explicit constructions of vertex transitive graphs that are not locally finite and yet still have locally-compact group of automorphisms.

\subsection{Previously known results}
When working on this article, the authors were not aware of any previously published results on (non)discreteness of Automorphism groups of Cayley graphs of Coxeter groups. Only when this project was finished, we found out about \cite{clarke} and \cite{haglund}.

The following is is a restatement of \cite[Theorem 5.12]{haglund}
\begin{theorem}
    For a finite graph $\Gamma$, if there exists $x\in V\Gamma$ and a non-trivial automorphism $\alpha\in\Aut(\Gamma)$ such that $\alpha\restriction_{\star(x)}=\id_\Gamma$ then the automorphism group $\Aut(\Cay(W_\Gamma, V\Gamma))$ is discrete, and is the semidirect product of $W_\Gamma$ and $\Aut(\Gamma)$. If not, then if the group $W_\Gamma$ is word-hyperbolic in the sense of Gromov, the automorphism group $\Aut(\Cay(W_\Gamma, V\Gamma))$ is not discrete.
\end{theorem}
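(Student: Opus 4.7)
The plan is to address the two implications separately, centred on the canonical homomorphism $\iota\colon W_\Gamma\rtimes \Aut(\Gamma) \to \A_\Gamma$ in which $W_\Gamma$ acts by left translation and in which a weight-preserving $\tau\in\Aut(\Gamma)$ acts on $\Cay(W_\Gamma, V\Gamma)$ by relabelling generators. The map $\iota$ is injective on general grounds: a left-translation fixing the identity vertex is trivial, and the induced $\Aut(\Gamma)$-action on the labelled Cayley graph is faithful. The theorem asserts that one of two incompatible situations occurs, so the arguments below must harness the hypothesis of each direction in turn.

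For the first implication, suppose $x\in V\Gamma$ and a non-trivial $\alpha\in\Aut(\Gamma)$ with $\alpha\restriction_{\star(x)}=\id_\Gamma$ exist, and aim to show $\iota$ is surjective and that $\A_\Gamma$ is discrete. Given $\varphi\in\A_\Gamma$, first compose with a left translation to arrange $\varphi(e)=e$, and then read off the restriction of $\varphi$ to the $1$-sphere $V\Gamma$ as a permutation $\pi$ of the generating set. The hypothesis on $(x,\alpha)$ would be invoked to show that $\pi$ extends to a weight-preserving automorphism $\tau\in\Aut(\Gamma)$ and that $\varphi$ coincides with $\iota(\tau)$; one then propagates the identification outwards by induction on word length, leveraging the fact that in the Coxeter setting the local $2$-neighbourhood structure of $\Cay(W_\Gamma, V\Gamma)$ determines $\varphi$ globally. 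Since $\Gamma$ is finite, so is $\Aut(\Gamma)$, whence the $\A_\Gamma$-stabiliser of $e$ is finite; this gives discreteness in the permutation topology, and together with the surjectivity of $\iota$ the claimed semidirect product decomposition.

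For the second implication, assume no such $(x,\alpha)$ exists and that $W_\Gamma$ is Gromov-hyperbolic; we produce uncountably many automorphisms of $\Cay(W_\Gamma, V\Gamma)$ fixing $e$. The strategy is to build a family of involutive local modifications of the Cayley graph, each supported in a bounded region and arising from a local Coxeter reflection symmetry at a chosen vertex along a geodesic ray in $W_\Gamma$. Gromov-hyperbolicity enters decisively here: by the thin-triangle property and the exponential divergence of rays one arranges that the supports of infinitely many of these involutions be pairwise disjoint, so that any subset defines an automorphism of $\Cay(W_\Gamma, V\Gamma)$ fixing $e$. The resulting $2^{\aleph_0}$ distinct automorphisms supply the required uncountable vertex stabiliser, contradicting discreteness.

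The principal obstacle is the second implication: by Theorem~\ref{thmA}, the absence of the star-fixing pair $(x,\alpha)$ in the countably-generated setting is precisely what rules out the canonical mechanism producing uncountable stabilisers, so one must generate the local symmetries directly from the hyperbolicity of $W_\Gamma$ alone. This would require a careful analysis of walls in the Davis complex of $W_\Gamma$, or of parabolic subgroups supported on finite induced subgraphs that, although not star-fixing, still harbour useful reflection symmetries; combined with hyperbolic geometric estimates to guarantee disjointness of supports, this is where the bulk of the technical work would lie.
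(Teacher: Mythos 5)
There is a genuine problem, and it lies not primarily in your technique but in the statement you set out to prove: as printed, the theorem has its two cases transposed (the paper quotes it from Haglund--Paulin, and the transposition is a typo), and taken literally it contradicts the paper's own Corollary~\ref{corA}, which asserts that $\A_\Gamma$ is \emph{non-discrete} precisely when a star-fixing pair $(x,\alpha)$ exists. Your first implication therefore cannot be salvaged: if such a pair exists, then $\link(x)$ is a good separating set (Lemma~\ref{char_good_sep_set}), and the lemma preceding Corollary~\ref{cor:uncountable stabilisers} produces uncountably many legal configurations of local actions, hence uncountably many automorphisms fixing $e$. The precise point where your propagation argument breaks is the claim that the local $2$-neighbourhood of $e$ determines $\varphi$ globally: Lemma~\ref{keylemma} only forces $\sigma(\varphi,v)$ and $\sigma(\varphi,vx)$ to agree on $\star(x)$, and a star-fixing $\alpha$ is exactly the licence to change the local action as one crosses an edge --- which is how the uncountable stabiliser is built.

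Your second implication fails for the dual reason. If no star-fixing pair exists, then $\Gamma$ admits no good separating set, and Proposition~\ref{prop_countable} shows that every element of $\A_\Gamma$ is an almost translation, so $\A_\Gamma = W_\Gamma \rtimes \Aut(\Gamma)$ and $\stab_{\A_\Gamma}(e) \cong \Aut(\Gamma)$ is finite; no amount of hyperbolic geometry can then produce $2^{\aleph_0}$ automorphisms fixing $e$. You half-noticed this yourself when you observed that the absence of the pair ``rules out the canonical mechanism''; the correct conclusion is that it rules out every mechanism. The intended reading of the theorem is: if no such pair exists, then $\Aut(\Cay(W_\Gamma,V\Gamma))$ is discrete and equals $W_\Gamma\rtimes\Aut(\Gamma)$; if such a pair exists and $W_\Gamma$ is word-hyperbolic, then it is non-discrete. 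With the cases restored, both halves follow from the paper's machinery (Proposition~\ref{prop_countable} and Corollary~\ref{cor:uncountable stabilisers}), and indeed without any hyperbolicity assumption.
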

Compared to the above, Corollary \ref{corA} is more general, as it makes no assumptions on the geometry of the group $W_\Gamma$.

The results presented in Graham Clarke's master thesis, in particular \cite[Theorem 4.2 and Proposition 4.31]{clarke}, are equivalent to Corollary \ref{corA}. However, we feel that our methods are more general and that our proofs are less technical. In particular, we intend to use the machinery of configurations of local actions in further projects.

\subsection*{Acknowledgements} 
When this work started, Federico Berlai was visiting the Department of Mathematics of the University of the Basque Country UPV/EHU, whose hospitality he greatly appreciates. Federico Berlai was supported by the Austrian Science Foundation FWF, grant no.~J4194, and gratefully acknowledges support from the Basque Government Grant IT974-16, from the Spanish Government Grants MTM2017-86802-P, partly with FEDER funds, and PID2020-117281GB-I00.

Majority of the work on this project was done while Michal Ferov was stuck in Europe for 21 months due to Australian policy of closed borders during the global pandemic of covid-19. During that time, and to the current day still, Michal Ferov was supported by the Australian Research Council Laureate Fellowship FL170100032 of professor George Willis.

The authors would like to thank Bruno Duchesne, Robert Kropholler, and Phillip Wesolek for sharing with us an old unpublished preprint on related topics. We thank Colin Reid for useful discussions and the suggestion to use our methods to the setting of infinitely generated Coxeter groups. We thank Anne Thomas for making us aware of a honours thesis written by her student Graham Clarke \cite{clarke} and the work of Haglund and Paulin \cite{haglund}.

\section{Preliminaries}\label{section2}
In this section we fix the notation and collect known and preliminary results that will be of use later.
The neutral element of a group $G$ is denoted by $e_G$, or simply by $e$ if the group is clear from the context. In case when $G = \Aut(\Delta)$, where $\Delta$ is some graph, we will also use $\id_{\Delta}$ to denote the neutral element in $G$, signifying that it is the identity map $\id_\Delta\colon \Delta\to \Delta$. Again, if the graph $\Delta$ is clear from the context we will omit the subscript. 

If $G$ is a group acting of a set $X$, then for a subset $S \subseteq X$ we will use $\stab_G(S)$ to denote the \emph{point-wise stabiliser} of $S$, i.e.
\begin{displaymath}
    \stab_G(S) = \{g \in G \mid g\cdot s = s \mbox{ for all $s \in S$}\}.
\end{displaymath}
We will use the following conventions with respect to the naming of elements. Given a Coxeter group $W_\Gamma$, we will denote by $x,y,\dots$ elements from its (fixed) generating set $V\Gamma$, by $u,v,w,\dots$ the vertices in its Cayley graph which we will identify with elements of the group $W_\Gamma$, and by $U,V,W$ we will usually denote words over the alphabet $V\Gamma$, i.e. elements of the free monoid $(V\Gamma)^*$ . The letter $\sigma$ will usually denote an element from the automorphism group of the graph $\Gamma$ associated to the Coxeter group, whereas $\alpha,\beta,\dots$ will denote elements of the automorphism group of the Cayley graph of $W_\Gamma$.

\subsection{Graphs}
In this work we will focus at the same time on Cayley graphs, and on simplicial graphs with weights associated to their edges. A Cayley graph of a group $G$ with respect to a generating set $S$ is an $S$-labelled oriented graph $\Cay(G,S)$, where the vertices of $\Cay(G,S)$ are identified with elements of $G$ and an ordered pair $(v,w)\in G\times G$ corresponds to an oriented edge labelled by $s \in S$ if and only if $w=vs$. In general, a Cayley graph may contain multiple edges and loops. However, since in this note we will only consider Cayley graphs of Coxeter groups generated by distinct non-trivial involutions, we might assume that $\Cay(G,S)$ is a $S$-labelled simplicial graph, i.e. an undirected graph without loops and multiple edges. When talking about automorphisms of Cayley graphs, we will consider bijections of the vertex set $G$ that preserve adjacency between vertices, but not necessarily the labels.

In the rest of this paper, by a graph we mean a simplicial graph, that is a graph $\Gamma=(V\Gamma,E\Gamma)$ where $V\Gamma$ is a set and $E\Gamma \subseteq \binom{V\Gamma}{2}$ is the set of edges, that is unoriented pair of vertices, such that there are no loops (edges of the form $\{x,x\}$ for $x\in V\Gamma$) nor multiple edges between the same pair of vertices. Two vertices $x,y\in V\Gamma$ are adjacent if $\{x,y\}\in E\Gamma$. A map $\sigma\colon V\Gamma\to V\Gamma$ is an automorphism of the graph $\Gamma$ if it is a bijection and $\{v,w\}\in E\Gamma$ if and only if $\{\sigma(v),\sigma(w)\}\in E\Gamma$.

By a subgraph we will always mean an induced subgraph, i.e. a subgraph whose edge set is fully determined by its vertices. Given two simplicial graphs $\Gamma = (V\Gamma, E\Gamma)$ and $\Delta = (V\Delta, E\Delta)$, we say that $\Gamma$ is an induced subgraph of $\Delta$ if $V\Gamma \subseteq V\Delta$ and $V\Gamma = \binom{V\Gamma}{2} \cup E\Delta$.

Let $m\colon V\Gamma\times V\Gamma\to \{1,2,\dots\}\cup\{+\infty\}$ be a function satisfying the following properties:
\begin{enumerate}
    \item[$c_1)$] $m(x,y)=m(y,x)$ for all $x,y\in V\Gamma$;
    \item[$c_2)$] $m(x,y)=1$ if and only if $x=y$;
    \item[$c_3)$] $m(x,y)=+\infty$ if and only if $\{x,y\}\notin E\Gamma$.
\end{enumerate}
We call $(\Gamma,m)=(V\Gamma,E\Gamma,m)$ a \emph{weighted graph} and $m$ a \emph{weight} for $\Gamma$. When the context is clear, we will hide $m$ from the notation and say that $\Gamma$ is a weighted graph.

A map $\sigma\colon V\Gamma\to V\Gamma$ is an automorphism of the weighted graph $\Gamma$ if it is a bijection such that $m(x,y)=m\bigl(\sigma(x),\sigma(y)\bigr)$ for all $x,y\in V\Gamma$. In particular, if $\sigma$ is an automorphism of $\Gamma$ then $\{x,y\}\in E\Gamma$ if and only if $\{\sigma(x),\sigma(y)\}\in E\Gamma$.
The group of automorphisms of a weighted graph $\Gamma$ is denoted by $\Aut(\Gamma)$.

If $m$ is a weight such that $m(V\Gamma\times V\Gamma)\subseteq \{1,2,+\infty\}$ then, by definition of weight, $m(x,y)=1$ if and only if $x=y$, and $m(x,y)=2$ if and only if $\{x,y\}\in E\Gamma$, and $m(x,y)=+\infty$ if and only if $\{x,y\}\notin E\Gamma$. In this case, a bijection of $V\Gamma$ is an automorphism of the weighted graph $(\Gamma,m)$ if and only if it is an automorphism of the unweighted graph $\Gamma$.

A proper subset of vertices $S\subsetneq V\Gamma$ is a \emph{separating set} if the induced subgraph spanned by $V\Gamma\setminus S$ is disconnected. By definition, if $\Gamma$ is already disconnected, the empty set is a separating set for the graph $\Gamma$.

\subsection{Permutation topologies}
\label{subsec:permutation topologies}
In this subsection we recall the notion of permutation topology and some basic facts. Readers familiar with the terminology may wish to skip this subsection.

Let  $X$ be a set and suppose that $G$ is a group acting on $X$. There is a group topology on $G$ naturally arising from its action on $X$, namely the \emph{permutation topology} or sometimes called the \emph{point-wise convergence topology}. The base of neighbourhoods of the identity consists of the collection of sets of the form $\stab_G(F)$, where $F \subset X$ is finite.

It then follows that for an element $g \in G$ its base of neighbourhoods consists of all the sets of the form
\begin{displaymath}
   \mathcal{B}(g, F) = \{f \in G \mid f\restriction_{F} = g\restriction_{F}\}.
\end{displaymath}
where, again, the subset $F \subset X$ is finite.

The following statement then follows immediately from the definition.
\begin{lemma}
    A subgroup $H \leq G$ is open in the permutation topology on $G$ arising from an action on a set $X$ if and only if it contains the pointwise stabiliser of some finite set $F \subset X$.
\end{lemma}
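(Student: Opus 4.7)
The plan is to prove each direction separately, using only the fact that the topology on $G$ is defined by declaring the pointwise stabilisers $\stab_G(F)$ (with $F \subset X$ finite) to be a base of neighbourhoods of the identity, together with the standard fact that left translation is a homeomorphism in any topological group.

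For the forward implication, suppose $H \leq G$ is open. Since $e \in H$, the set $H$ is an open neighbourhood of the identity, and so by definition of the permutation topology it must contain a basic open neighbourhood of $e$. That is, there exists a finite subset $F \subset X$ with $\stab_G(F) \subseteq H$, which is exactly the desired conclusion.

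For the backward implication, assume $\stab_G(F) \subseteq H$ for some finite $F \subset X$. To show $H$ is open, I would show that every $h \in H$ has an open neighbourhood contained in $H$. Given $h \in H$, observe that
\begin{displaymath}
h \cdot \stab_G(F) \subseteq H \cdot H = H,
\end{displaymath}
where the inclusion uses $\stab_G(F) \subseteq H$ and the equality uses that $H$ is a subgroup. Since the permutation topology makes $G$ into a topological group (or, more directly, since the coset $h \cdot \stab_G(F)$ equals $\mathcal{B}(h,F)$ described in the preceding paragraph of the paper), this coset is an open neighbourhood of $h$ contained in $H$. Hence $H$ is open.

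I do not anticipate any real obstacle: both directions are formal consequences of the definition of the base at the identity and the fact that the basic open neighbourhoods of an arbitrary point $h$ are left translates of basic neighbourhoods of $e$. The only mild care needed is to make sure one uses that $H$ is a \emph{subgroup} (so that $h \cdot H \subseteq H$) and not merely a subset; for a subset, the statement would fail.
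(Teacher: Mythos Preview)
Your proof is correct. The paper does not actually give a proof of this lemma; it simply states that ``the following statement then follows immediately from the definition,'' and your argument spells out precisely those immediate details in the expected way.
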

Quite clearly, the kernel of the action is contained in every neighbourhood of the identity, so it is the smallest (with respect to inclusion) open subset containing the identity. In fact, the kernel of the action is the connected component of the identity - it then follows that the permutation topology is totally disconnected if an only if the action is faithful.

Further, if the action is faithful, then for each tuple of distinct elements $f,g \in G$ there exists $x \in X$ such that $f\cdot x \neq g \cdot x$. In particular, this means $\mathcal{B}(f, \{x\})$ and $\mathcal{B}(g,\{x\})$ are disjoint open neighbourhoods of $f$ and $g$ respectively. Conversely, following the definition of permutation topology, we see that if the permutation topology arising from the action on $X$ is not Hausdorff, then there exists $g \in X$ contained in every neighbourhood of the identity, therefore $g \cdot x = x$ for all $x \in X$.

The following lemma sums up the following two observations.
\begin{lemma}
    The permutation topology on $G$ is Hausdorff if and only if the action of $G$ on $X$ is faithful. Furthermore, $G$ is totally disconnected if and only if the action is faithful.
\end{lemma}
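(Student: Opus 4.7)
The statement essentially repackages the observations made in the three paragraphs preceding the lemma, so the plan is to organise those into a clean argument in two parts.

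For the Hausdorff equivalence I would argue as follows. Suppose first that the action is faithful, and let $f \neq g$ in $G$. Then $f^{-1}g \neq e$, so by faithfulness there exists $x \in X$ with $f^{-1}g \cdot x \neq x$, i.e. $f \cdot x \neq g \cdot x$; the sets $\mathcal{B}(f, \{x\})$ and $\mathcal{B}(g, \{x\})$ are then disjoint basic open neighbourhoods separating $f$ from $g$. Conversely, if the action is not faithful, pick any non-trivial $g$ in the kernel $K$; then $g \in \stab_G(F)$ for every finite $F \subseteq X$, so $g$ lies in every basic open neighbourhood of $e$, and no open separation of $e$ from $g$ is possible.

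For the totally-disconnected equivalence the key observation is that the permutation topology has a neighbourhood basis at the identity made of open subgroups, namely the pointwise stabilisers $\stab_G(F)$ with $F \subseteq X$ finite. In any topological group an open subgroup is also closed (its complement is a union of open cosets), so each such basic neighbourhood is in fact clopen. The connected component $G^0$ of the identity, being connected and containing $e$, must lie inside every clopen set through $e$, hence $G^0 \subseteq \bigcap_F \stab_G(F) = K$. Conversely, in any topological group one has the standard identity $\overline{\{e\}} = \bigcap_U U$, where $U$ ranges over open neighbourhoods of $e$; in our setting this gives $\overline{\{e\}} = K$. Since $\overline{\{e\}}$ is the closure of a connected set it is connected, so $K \subseteq G^0$. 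Combining the two inclusions yields $G^0 = K$, whence $G$ is totally disconnected iff $K = \{e\}$ iff the action is faithful.

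I do not expect any real obstacles: both implications fall out as soon as one unpacks the definition of $\mathcal{B}(g,F)$ and uses that open subgroups of a topological group are clopen. The only place where a little care is needed is in pinning down $G^0$ exactly, and for this the relation $\overline{\{e\}} = \bigcap_U U$ (valid in any topological group since inversion and translation are homeomorphisms) does all the work.
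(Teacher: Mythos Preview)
Your proposal is correct and follows the same line as the paper, which in fact does not give a formal proof at all: the lemma is stated as a summary of the two informal paragraphs preceding it, where the Hausdorff equivalence is argued exactly as you do and the totally-disconnected part is reduced to the assertion that the kernel of the action coincides with the identity component. Your contribution is to actually justify that assertion via the clopen-subgroup argument and the identity $\overline{\{e\}}=\bigcap_U U$, which is a cleaner and more rigorous treatment than the paper's own remark (the paper's phrase ``the smallest open subset containing the identity'' is in fact imprecise, since the kernel need not be open).
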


Within the scope of this paper, $X$ will be a vertex transitive simplicial graph, with $X = (VX, EX)$, where $VX$ denotes the set of vertices of $X$ and $EX \subseteq \binom{VX}{2}$ denotes the set of edges of $X$, and $G \leq \Aut(X)$. In particular, this means that $G$ is totally disconnected. In the case when $X$ is locally finite, i.e. when each vertex is adjacent to only finitely many vertices, it can be easily seen that $\Aut(X)$ is locally compact.

\subsection{Coxeter groups}\label{subsection_Coxeter}
The \emph{right-angled Coxeter group} $W_\Gamma$ (also abbreviated by RACG) associated to a graph $\Gamma=(V\Gamma,E\Gamma)$ without weights is the group given by the presentation
\begin{equation*}
    W_\Gamma:=\langle V\Gamma\mid x^2=e\ \forall x\in V\Gamma,\, xy=yx\ \forall \{x,y\}\in E\Gamma\rangle.
\end{equation*}
More generally, 
a \emph{Coxeter group} is a group (not necessarily finitely generated) given by the presentation of the form 
\begin{equation}\label{equation_Coxetergroup}
       W:= \left\langle x_1, \dots, x_n, \dots {} \mid \left( x_i x_j \right)^{m_{i,j}}=e\ \forall i,j \right\rangle,
\end{equation}
where the exponents $m_{i,j}\in \{1,2,\dots\}\cup \{+\infty\}$ satisfy:
\begin{enumerate}
    \item $m_{i,j}=m_{j,i}$ for all $i,j$;
    \item $m_{i,j}=1$ if and only if $i=j$, that is, generators are involutions;
    \item if $m(x,y)=+\infty$ then, by definition, there is no relation between $x_i$ and $x_j$.
\end{enumerate}
In the literature the set $S=\{x_1,\dots,x_n\}$, when finite, is called a \emph{Coxeter generating set}, and the pair $(W,S)$ is called a \emph{Coxeter system}, although in this paper we will not enforce this terminology.

To each Coxeter system (we maintain the notation of Equation \eqref{equation_Coxetergroup}) we can associate a weighted graph $\Gamma$ in the following way. The vertex set $V\Gamma$ is defined to be the set $\{x_1,\dots,x_n,\dots\}$. Two different vertices $x_i$ and $x_j$ are joined by an edge in $\Gamma$ if and only if $m_{i,j}<+\infty$, and the weight map is defined as $m(x_i,x_j):=m_{i,j}$.

Therefore, any Coxeter group is of the form
\begin{equation}\label{equation_defining_graph}
    W_\Gamma:=\langle V\Gamma\mid (xy)^{m(x,y)}=e\,\,\forall x,y\in V\Gamma \rangle,
\end{equation}
where $(\Gamma,m)$ is a weighted graph. Again, $(xy)^{+\infty}=e$ means that there is no relation imposed between the generators $x$ and $y$.

Notice that a Coxeter group is a right-angled Coxeter group if and only if its weight function satisfies $m( V\Gamma\times V\Gamma)\subseteq \{1,2,+\infty\}$. That is, we can think of right-angled Coxeter groups as Coxeter groups in which all edges have weight equal to two.

\smallskip
Given a Coxeter group $W_\Gamma$, we denote by $\C_{\Gamma}$ the Cayley graph of $W_\Gamma$ with respect to the generating set $V\Gamma$, that is the graph whose vertex set is $W_\Gamma$, and such that two vertices $v,w\in W_\Gamma$ are joined by an edge if and only if there exists $x\in V\Gamma$ satisfying $v=wx$. In this case, the edge $\{v,w\}$ is \emph{labelled} by the letter $x$, that is $\C_\Gamma$ is labelled by the set $V\Gamma$.

Notice that the Cayley graph is regular, connected, and each vertex has valency $\lvert V\Gamma\rvert$, each incident edge being labelled by exactly one element of $V\Gamma$. In particular, this Cayley graph will be locally finite if and only if the generating set $V\Gamma$ is finite. Given a vertex $v$ in the Cayley graph and a natural number $n$, we denote by $B(v,n)$ the ball of radius $n$ in $\C_\Gamma$ around the vertex $v$.

We denote by $\A_\Gamma$ the group of automorphisms $\Aut(\C_\Gamma)$. When the graph $\Gamma$ is clear from the context, we will often drop the subscript $\Gamma$ to ease notation. We are not imposing that elements of $\A_\Gamma$ preserve labels.

\subsection{Reduced forms, parabolic subgroups}

\medskip
Suppose that $\Gamma$ is an unweighted graph (or, equivalently, a graph with a weight function whose images are contained in $\{1,2,+\infty\}$).
Given  a pair of vertices $x,y\in V\Gamma$, that is a pair of canonical generators of the right-angled Coxeter group $W_\Gamma$, we have that $x$ and $y$ are joined by an edge in $\Gamma$ if and only if, for any vertex $v$ in the Cayley graph~$\C_\Gamma$, we find an embedded cycle of length four in $\C_\Gamma$ that has $v$ as a vertex and where opposite sides of the square are labelled by $x$ and $y$ respectively (compare with Figure \ref{fig1}). We call such embedded cycle a \emph{commuting square} at $v$ with labels $x$ and $y$.
\begin{figure}[ht!]
\begin{tikzpicture}
\draw (0,0) circle (1.5cm);

\filldraw (0,0) circle (0.035) node[yshift=-.25cm] {$v$}
(1.5,0) circle (0.035) node[yshift=-.25cm ,xshift=.25cm] {$vx$};

\begin{scope}[rotate=45]
\filldraw (1.5,0) circle (0.035) node[yshift=.25 cm] {$vy$};

\begin{scope}[shift={(1.5,0)},rotate=45]
\filldraw (0,-1.5) circle (0.035) node[yshift=.25cm] {$vxy$};
\end{scope}
\end{scope}

\draw[thick,red] (0,0) -- (1.5,0);

\begin{scope}[rotate=45]
\draw[thick,blue] (0,0) -- (1.5,0);
\end{scope}

\begin{scope}[shift={(1.5,0)},rotate=45]
\draw[thick,blue] (0,0) -- (1.5,0);
\end{scope}

\begin{scope}[shift={({1.5*cos(45)},{(1.5*cos(45))})}]
\draw[thick,red] (0,0) -- (1.5,0);
\end{scope}
\end{tikzpicture}
\caption{Commuting square at $v$.}
\label{fig1}
\end{figure}
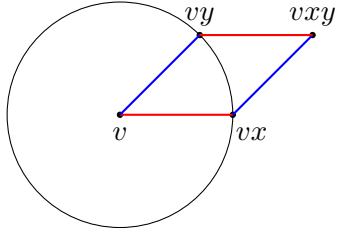

\noindent
The same observation extends to weighted graphs, that is to Cayley graphs of Coxeter groups: given any vertex $v$ in $\C_\Gamma$, two generators $x$ and $y$ will alternatively label the edges of an embedded cycle of even length $2l$ (one of whose vertices is $v$) if and only if $\{x,y\}\in E\Gamma$ and $m(x,y)=l$. We call this embedded cycle the \emph{relation cycle} of $x$ and $y$ at the vertex $v$. That is, commuting squares are exactly the relation cycles of length four.

\medskip
To prove Lemma \ref{lemma21} we need some notions for Coxeter groups. Let $\Gamma=(V\Gamma,E\Gamma,m)$ be a weighted simplicial graph, and consider the free monoid $V\Gamma^*$, that is the set of finite sequences of $V\Gamma$-elements, which we call \emph{words}. 
The elements $x_1,\dots,x_r$ in a word $W = (x_1,\dots,x_r)\in V\Gamma^*$ are called the \emph{syllables} of the word, and we say that $r$ is the \emph{length} of the word, writing $r = |W|$. We will use $\epsilon$ to denote the empty word.

Any such finite sequence $(x_1,\dots,x_r)$, or word, defines a group element via the evaluation map $\omega: V\Gamma^*\to W_\Gamma$ defined by
\begin{equation*}
\omega\bigl(x_1,\dots,x_r\bigr):=x_1\dots x_r\in W_\Gamma,
\end{equation*}
where $\omega(\epsilon):=e\in W_\Gamma$. We say that the word $(x_1,\dots,x_r)$ represents the group element $\omega\bigl(x_1,\dots,x_r\bigr)$.
Notice that different words may represent the same group element. 

Let us consider the following \emph{elementary simplifications} on $V\Gamma^*$:
\begin{itemize}
    \item[$s_1)$] if $y\in V\Gamma$ and two consecutive syllables of $(x_1,\dots,x_r)$ are equal to $y$, then remove these two syllables to obtain a sequence of length $r-2$;
    \item[$s_2)$] if $x,y\in V\Gamma$ and $m(x,y)$ consecutive syllables of $(x_1,\dots,x_r)$ are alternatively equal to $x$ and $y$, then in these $m(x,y)$ syllables replace all occurrences of $x$ with $y$ and vice versa.
\end{itemize}
If a word $w_2$ can be obtained from a word $w_1$ with finitely many elementary simplifications, then $\omega(w_1)=\omega(w_2)$.
We call the second kind of simplification a \emph{braid relation}. Rephrasing it, a braid relation is a substitution of the form
\begin{equation*}
\bigl(x_1,\dots, \underbrace{x,y,x,\dots}_{\text{length }m(x,y)},\dots,x_r\bigr)\quad \longmapsto\quad \bigl(x_1,\dots, \underbrace{y,x,y,\dots}_{\text{length }m(x,y)},\dots,x_r\bigr)
\end{equation*}
among finite sequences in $V\Gamma^*$. In particular, it must be that $m(x,y)\neq +\infty$ for a braid relation to occur. Moreover, as they are tautologically true, we will never consider braid relations for $x=y$, that is for $m(x,y)=1$, as in this case the braid relation does not modify the sequence.
Braid relations do not affect length of words, whereas a simplification of the first kind reduces the length by two. 

In the context of right-angled Coxeter groups, a braid relation is called \emph{syllable swapping}, because in that case we replace the subword $(x,y)$ with the word $(y,x)$.

The following fundamental result (compare \cite[Theorem~3.3.1]{BB}) characterises words of minimal length among words representing a given element $g \in W_\Gamma$.
\begin{theorem}\label{Tits_theorem}
Let $W_\Gamma$ be a Coxeter group and $w_1,w_2\in V\Gamma^*$ be two words of minimal length representing an element $g\in W_\Gamma$. Then $w_1$ can be obtained from $w_2$ applying a finite number of braid relations, and vice versa.
\end{theorem}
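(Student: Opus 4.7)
I would proceed by induction on the common length $r=|w_1|=|w_2|$; the cases $r\in\{0,1\}$ are immediate. For the inductive step, write $w_1=(s,x_2,\dots,x_r)$ and $w_2=(t,y_2,\dots,y_r)$ with $s,t\in V\Gamma$. If $s=t$, the truncated words $(x_2,\dots,x_r)$ and $(y_2,\dots,y_r)$ are both minimal-length expressions for the element $sg$, so the inductive hypothesis provides a chain of braid relations transforming one into the other; prepending the syllable $s$ yields the desired chain from $w_1$ to $w_2$.

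The heart of the argument is the case $s\neq t$. The central tool I would use is the \emph{exchange condition}: if $u=(u_1,\dots,u_k)$ has minimal length among words representing some $h\in W_\Gamma$ and if the minimal length of $xh$ is strictly less than $k$ for some $x\in V\Gamma$, then $xh$ is represented by a word obtained from $u$ by deleting exactly one syllable $u_i$. Geometrically, such a deletion corresponds to a shortcut along a suitable relation cycle in $\C_\Gamma$, which ties the statement directly to the labelled polygons discussed before Figure~\ref{fig1}.

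Applying the exchange condition iteratively to $w_1$ and $w_2$ produces a sequence of minimal-length expressions for $g$ with strictly growing alternating prefixes in $s$ and $t$. A dihedral-length argument inside the parabolic subgroup $\langle s,t\rangle\leq W_\Gamma$ forces the process to terminate after exactly $m(s,t)$ alternations (which in particular must be finite), yielding minimal-length expressions
\begin{equation*}
\bigl(\underbrace{s,t,s,t,\dots}_{m(s,t)\text{ syllables}},\,z_1,\dots,z_{r-m(s,t)}\bigr)\quad\text{and}\quad\bigl(\underbrace{t,s,t,s,\dots}_{m(s,t)\text{ syllables}},\,z_1,\dots,z_{r-m(s,t)}\bigr)
\end{equation*}
for $g$, sharing the same tail $(z_1,\dots,z_{r-m(s,t)})$. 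A single braid relation links the two displayed words. By the already-handled case $s=t$, the first is connected by braid relations to $w_1$ and the second to $w_2$; concatenating the three braid-chains completes the induction.

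The main obstacle is the exchange condition itself, together with the precise length-$m(s,t)$ claim for the alternating prefix. This is the classical Matsumoto--Tits theorem, proved in \cite[Theorem~3.3.1]{BB} via the canonical geometric representation of $W_\Gamma$ on a real vector space and a careful analysis of how reflections permute the associated positive root system. Since this preparation is essentially independent of the Cayley-graph techniques developed in the remainder of the paper, I would simply invoke the proof in \cite{BB} rather than reproduce it here.
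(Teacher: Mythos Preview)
The paper does not actually prove Theorem~\ref{Tits_theorem}: it simply states the result and refers the reader to \cite[Theorem~3.3.1]{BB}. Your proposal therefore goes well beyond what the paper does, since you outline the classical inductive argument via the exchange condition before ultimately also deferring to \cite{BB}. Your sketch is correct and is precisely the standard route to the Matsumoto--Tits theorem; the only point worth flagging is that when you write ``applying the exchange condition iteratively \dots\ produces a sequence of minimal-length expressions with strictly growing alternating prefixes,'' you are implicitly using not just the exchange condition for a single generator but the stronger fact that a reduced expression beginning with $s$ can be rewritten (by braid moves of shorter length, handled by the inductive hypothesis) to begin with any desired alternating $s,t$-prefix of the appropriate length---this is exactly where the induction is doing real work, and it would be worth making that dependence explicit if you were to write out the proof in full.
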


If $g\in W_\Gamma$ is an element of the Coxeter group $W_\Gamma$, with $\lVert g\rVert$ we denote the length of any word $w\in V\Gamma^*$ of minimal length representing the element, i.e.
\begin{displaymath}
    \|g\| = \min_{W \in V\Gamma^*} \{|W| \mid \omega(W) = g\}
\end{displaymath}

An automorphism $\alpha\in \A_\Gamma$ and a vertex $v$ in the Cayley graph $\C_\Gamma$ uniquely define an automorphism $\sigma(\alpha,v)\in\Aut(\Gamma)$, as follows. As $\alpha$ is an automorphism, it induces a bijection between the edges incident to $v$ and the edges incident to $\alpha(v)$, and therefore a bijection $\sigma(\alpha,v)\colon V\Gamma\to V\Gamma$ between their labels. We call $\sigma(\alpha, v)$ the \emph{local action of $\alpha$ at $v$} and we claim that this map is an automorphism of the weighted graph~$\Gamma$, that is, it preserves the weights $m(x,y)$ for all $x,y\in V\Gamma$. 

To demonstrate this, let $\{x_1,x_2\}\in E\Gamma$ be an edge, and let $w_1,w_2$ be vertices in the Cayley graph $\C_\Gamma$ adjacent to $v$ such that $\{v,w_1\}$ is labelled by $x_1$ and $\{v,w_2\}$ is labelled by $x_2$.
In the Cayley graph $\C_\Gamma$ we then see two relation cycles of length $2m(x_1,x_2)$ at the vertices $v$ and $\alpha(v)$, labelled by $x_1,x_2$ and by $\alpha(x_1),\alpha(x_2)$ respectively ($m(x_1,x_2)=2$ in Figure \ref{fig2}, and different colours correspond to different labels: blue for $x_1$, red for $x_2$, cyan for the label of $\{\alpha(v),\alpha(w_1)\}$, magenta for the label of $\{\alpha(v),\alpha(w_2)\}$).
\begin{figure}[ht]
\begin{tikzpicture}
\draw (0,0) circle (1.5cm);

\filldraw (0,0) circle (0.035) node[yshift=-.25cm] {$v$}
(1.5,0) circle (0.035) node[yshift=-.25cm ,xshift=.25cm] {$w_2$};

\begin{scope}[rotate=45]
\filldraw (1.5,0) circle (0.035) node[yshift=.25 cm] {$w_1$};

\begin{scope}[shift={(1.5,0)},rotate=45]
\filldraw (0,-1.5) circle (0.035) node[yshift=.25cm] {$w$};
\end{scope}
\end{scope}

\draw[thick,red] (0,0) -- (1.5,0);

\begin{scope}[rotate=45]
\draw[thick,blue] (0,0) -- (1.5,0);
\end{scope}

\begin{scope}[shift={(1.5,0)},rotate=45]
\draw[thick,blue] (0,0) -- (1.5,0);
\end{scope}

\begin{scope}[shift={({1.5*cos(45)},{(1.5*cos(45))})}]
\draw[thick,red] (0,0) -- (1.5,0);
\end{scope}
\end{tikzpicture}
\qquad 
\begin{tikzpicture}
\begin{scope}[rotate=-35]
\draw (0,0) circle (1.5cm);

\filldraw (0,0) circle (0.035) node[yshift=.25cm] {$\alpha(v)$}
(1.5,0) circle (0.035) node[yshift=-.25cm ,xshift=.25cm] {$\alpha(w_2)$};

\begin{scope}[rotate=45]
\filldraw (1.5,0) circle (0.035) node[yshift=.25 cm] {$\alpha(w_1)$};

\begin{scope}[shift={(1.5,0)},rotate=45]
\filldraw (0,-1.5) circle (0.035) node[yshift=-.25cm] {$\alpha(w)$};
\end{scope}
\end{scope}

\draw[thick,magenta] (0,0) -- (1.5,0);

\begin{scope}[rotate=45]
\draw[thick,cyan] (0,0) -- (1.5,0);
\end{scope}

\begin{scope}[shift={(1.5,0)},rotate=45]
\draw[thick,cyan] (0,0) -- (1.5,0);
\end{scope}

\begin{scope}[shift={({1.5*cos(45)},{(1.5*cos(45))})}]
\draw[thick,magenta] (0,0) -- (1.5,0);
\end{scope}
\end{scope}
\end{tikzpicture}
\caption{Graph automorphisms preserve the length of embedded cycles.}
\label{fig2}
\end{figure}
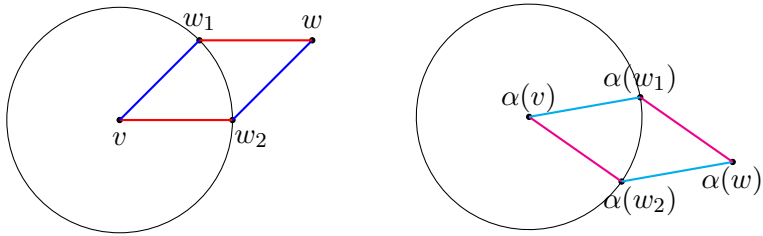

\noindent
Because $\alpha$ is an automorphism, the relation cycle at $v$ with labels $x_1$ and $x_2$ must be mapped to the relation cycle at $\alpha(v)$ with labels $\sigma(\alpha,v)(x_1)$ and $\sigma(\alpha,v)(x_2)$. Therefore $\sigma(\alpha,v)(x_1)$ and $\sigma(\alpha,v)(x_2)$ are joined by an edge in~$\Gamma$ and $m(x_1,x_2)=m\bigl(\sigma(\alpha,v)(x_1),\sigma(\alpha,v)(x_2)\bigr)$, that is the bijection $\sigma(\alpha,v)$ preserves weights. Therefore $\sigma(\alpha,v)\in \Aut(\Gamma)$. This means that we can think of $\sigma$ as a well defined map
\begin{displaymath}
    \sigma \colon \A_\Gamma \times W_\Gamma \to \Aut(\Gamma)
\end{displaymath}
and, similarly, for a given $\alpha \in \A \Gamma$ the map $\sigma(\alpha, -) \colon W_\Gamma \to \Aut(\Gamma)$ is well-defined.

\begin{definition}\label{definition_translation}
Let $\Gamma$ be a weighted graph and consider the Cayley graph~$\C_\Gamma$ of the associated Coxeter group. We say that $\alpha\in \A_\Gamma$ is 
\begin{enumerate}
    \item a \emph{translation} if $\sigma(\alpha, v) = \id_\Gamma$ for all $v \in W_{\Gamma}$, that is for all edges $\{v,w\}$ in~$\C_\Gamma$ the edges $\{v,w\}$ and $\{\alpha(v),\alpha(w)\}$ have the same label;
    \item an \emph{almost translation} if $\sigma(\alpha,v)=\sigma(\alpha,w)$ for all vertices $v,w$ in $\C_\Gamma$.
\end{enumerate}
\end{definition}
Equivalently, one can say that $\alpha \in \A_\Gamma$ is a translation if and only if $\sigma(\alpha, v) = \id_\Gamma$ for all $v \in W_\Gamma$, and that $\alpha$ is an almost translation if and only if the map $\sigma(\alpha, -)$ is constant.

We call these automorphisms translations because they correspond to left multiplications by $W_\Gamma$-elements, as recorded in Lemma \ref{label_preserving_autos_fg}.

Given $w\in W_\Gamma$, let us define $L_w\colon W_\Gamma\to W_\Gamma$ as $L_w(v):=wv$. As the Cayley graph $\C_\Gamma$ is a right-Cayley graph, $L_w$ is an element of the automorphism group of $\C_\Gamma$, that is $L_w\in \A_\Gamma$. In the following lemma we record that $w\mapsto L_w$ is an injective homomorphism whose image is the subgroup generated by translations of $\C_\Gamma$. More generally, this lemma works for any group, not just for Coxeter groups, and any generating set.

\begin{lemma}\label{label_preserving_autos_fg_general}
Let $G$ be a group, let $S$ be a generating set for $G$, and consider the (right) Cayley graph $\Cay(G,S)$. 
The map $w\mapsto L_w$ is an injective homomorphism, and therefore $G$ embeds as a subgroup of $\Aut\bigl(\Cay(G,S)\bigr)$.

Moreover, if two translations $\alpha$ and $\beta$ in $\Cay(G,S)$ are such that $\alpha(v)=\beta(v)$ for some vertex $v\in \Cay(G,S)$, then $\alpha=\beta$.
In particular, the set of all translations in $\Cay(G,S)$ is a subgroup isomorphic to $G$.
\end{lemma}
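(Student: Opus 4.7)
The plan is to verify the three claims in order: that each $L_w$ is a translation of $\Cay(G,S)$, that $w\mapsto L_w$ is an injective homomorphism, and finally the rigidity statement for translations from which the last sentence follows.

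First I would check that for every $w\in G$ the map $L_w$ is a graph automorphism. It is a bijection of $G=V\Cay(G,S)$ with inverse $L_{w^{-1}}$. An edge of $\Cay(G,S)$ has the form $\{v,vs\}$ with $s\in S$, and its image under $L_w$ is $\{wv,wvs\}$, which is again an edge with the \emph{same} label $s$. Thus $L_w$ preserves both edges and labels, so $L_w\in\Aut\bigl(\Cay(G,S)\bigr)$ and moreover $L_w$ is a translation in the sense of Definition \ref{definition_translation} (this notion was defined for Coxeter-group Cayley graphs, but the same definition makes sense here). The homomorphism property is immediate: $L_{w_1w_2}(v)=w_1w_2v=L_{w_1}\bigl(L_{w_2}(v)\bigr)$. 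Injectivity is equally easy: if $L_w=\id$, then evaluating at $e$ gives $w=e$.

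Next I would prove the rigidity statement. Suppose $\alpha,\beta$ are translations with $\alpha(v)=\beta(v)$ for some $v\in G$. The composition and inverse of translations are translations (since both operations respect the property of preserving every edge label), so $\gamma:=\beta^{-1}\alpha$ is a translation fixing $v$. Since $\gamma$ preserves labels, for every neighbour $vs$ of $v$ we have $\gamma(vs)=\gamma(v)s=vs$, so $\gamma$ fixes the entire $1$-neighbourhood of $v$. Iterating along paths emanating from $v$, and using that $\Cay(G,S)$ is connected because $S$ generates $G$, one obtains $\gamma=\id$, hence $\alpha=\beta$.

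Finally, the set $T$ of all translations is a subgroup of $\Aut\bigl(\Cay(G,S)\bigr)$ by the closure remark just made, and the injective homomorphism $w\mapsto L_w$ has image contained in $T$. To see it is onto $T$, take any translation $\alpha$ and set $w:=\alpha(e)$; then $L_w$ and $\alpha$ are two translations agreeing at $e$, so by rigidity $\alpha=L_w$. Hence $T\cong G$. None of the steps is a serious obstacle; the only point requiring care is the induction that propagates the identity from a single fixed vertex to all of $G$, and this is a routine consequence of connectedness of the Cayley graph together with the label-preserving property of translations.
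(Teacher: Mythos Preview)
Your proof is correct and follows essentially the same approach as the paper: both verify that left multiplications are label-preserving automorphisms, use the label-preserving property together with connectedness of the Cayley graph to propagate agreement from a single vertex to the whole graph, and then identify an arbitrary translation $\alpha$ with $L_{\alpha(e)}$ via this rigidity. The only cosmetic difference is that you pass to $\gamma=\beta^{-1}\alpha$ before propagating, while the paper argues directly that $\alpha$ and $\beta$ agree on growing balls; this is the same argument.
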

\begin{proof}
The map $\iota\colon G\to \Aut\bigl(\Cay(G,S)\bigr)$ defined by $w\mapsto L_w$ is an injective homomorphism by Cayley's theorem, and $L_w$ is a label-preserving for any $w\in G$.

Let $v\in \Cay(G,S)$ be such that $\alpha(v)=\beta(v)$. Thus, as $\alpha$ and $\beta$ are label-preserving, we deduce that they agree on the ball of radius one around~$v$, that is $\alpha(u)=\beta(u)$ for all $u\in B(v,1)$. As the Cayley graph is a connected graph, repeating this argument shows that $\alpha=\beta$.

To prove that the image of the map $\iota$ is the set of translations in the group $\Aut\bigl(\Cay(G,S)\bigr)$, that is the set of label-preserving automorphisms, consider a translation $\alpha\in \Aut\bigl(\Cay(G,S)\bigr)$. 
 Then $\alpha(e)=L_{\alpha(e)}(e)$, and thus by what proved so far $\alpha=L_{\alpha(e)}$.
\end{proof}
Thus, the subgroup of label-preserving automorphisms of a Cayley graph $\Cay(G,S)$ does not depend on the choice of generating set $S$.
Specialising Lemma~\ref{label_preserving_autos_fg_general} to our case of interest, that is to Coxeter groups, we obtain:
\begin{lemma}\label{label_preserving_autos_fg}
Let $\alpha$ and $\beta$ be two translations in $\A_\Gamma$ whose image coincide on a vertex $v\in \C_\Gamma$. Then $\alpha=\beta$. In particular, the set of all translations in $\A_\Gamma$ is a subgroup isomorphic to $W_\Gamma$.
\end{lemma}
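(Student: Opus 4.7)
The plan is to deduce this lemma directly from Lemma~\ref{label_preserving_autos_fg_general} by specialising to $G = W_\Gamma$ and $S = V\Gamma$, so no new geometric argument is needed.

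The only point that requires verification is that the notion of \emph{translation} from Definition~\ref{definition_translation}(1) coincides with the notion of \emph{label-preserving automorphism} used in the statement and proof of Lemma~\ref{label_preserving_autos_fg_general}. This is immediate: by Definition~\ref{definition_translation}, an automorphism $\alpha \in \A_\Gamma$ is a translation precisely when, for every edge $\{v,w\}$ in $\C_\Gamma$, the edges $\{v,w\}$ and $\{\alpha(v),\alpha(w)\}$ carry the same label in $V\Gamma$, which is exactly the statement that $\alpha$ preserves the $V\Gamma$-labelling of $\C_\Gamma = \Cay(W_\Gamma, V\Gamma)$.

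Once this identification is in place, the first claim (that two translations agreeing at a single vertex must coincide) is the second paragraph of Lemma~\ref{label_preserving_autos_fg_general} applied verbatim. For the second claim, Lemma~\ref{label_preserving_autos_fg_general} shows that $w \mapsto L_w$ is an injective homomorphism from $W_\Gamma$ into $\A_\Gamma$ whose image consists exactly of the label-preserving automorphisms; by the identification above, this image is the set of translations in $\A_\Gamma$, which is therefore a subgroup of $\A_\Gamma$ isomorphic to $W_\Gamma$.

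There is no real obstacle here; the lemma is essentially a restatement of the general fact, and the only thing to flag is the slight shift in terminology (``translation'' versus ``label-preserving automorphism''). I would keep the write-up to a couple of lines invoking Lemma~\ref{label_preserving_autos_fg_general} together with the observation that translations in the sense of Definition~\ref{definition_translation} are precisely the label-preserving elements of $\A_\Gamma$.
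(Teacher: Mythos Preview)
Your proposal is correct and matches the paper's approach exactly: the paper states this lemma without a separate proof, introducing it with ``Specialising Lemma~\ref{label_preserving_autos_fg_general} to our case of interest, that is to Coxeter groups, we obtain,'' and your observation that translations in the sense of Definition~\ref{definition_translation} are precisely the label-preserving automorphisms is the only thing needed to make this specialisation go through.
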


In view of Lemma \ref{label_preserving_autos_fg},
with a slight abuse of notation we denote the set of translations by $W_\Gamma$.
Thus, the expressions $y\in W_\Gamma$ and $L_y\in W_\Gamma$ make both sense. The first will mean that we are considering the element $y$ is in the group $W_\Gamma$, whereas the second will mean that we are considering the translation $L_y$ in $\A_\Gamma$.

\begin{lemma}\label{autGamma_inclusion}
The subgroup of $\A_\Gamma$ generated by almost translations that fix the identity element is isomorphic to $\Aut(\Gamma)$.
\end{lemma}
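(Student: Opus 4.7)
The plan is to build an explicit isomorphism $\Phi \colon \Aut(\Gamma) \to H$, where $H \subseteq \A_\Gamma$ denotes the set of almost translations that fix the identity vertex $e \in W_\Gamma$, and then to observe that $H$ is already closed under composition and inversion, so that the subgroup it generates coincides with $H$ itself.

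First, I would construct the map. Given $\sigma \in \Aut(\Gamma)$, define $\Phi(\sigma) = \tilde\sigma \colon W_\Gamma \to W_\Gamma$ by
\begin{equation*}
\tilde\sigma(x_1 \cdots x_n) := \sigma(x_1)\cdots \sigma(x_n), \qquad (x_1,\dots,x_n) \in V\Gamma^\ast.
\end{equation*}
This is well-defined on $W_\Gamma$ because $\sigma$ preserves the weight function, and hence preserves the defining relations of $W_\Gamma$ from \eqref{equation_defining_graph}: indeed $\sigma(x)^2 = e$ and $\bigl(\sigma(x)\sigma(y)\bigr)^{m(\sigma(x),\sigma(y))} = \bigl(\sigma(x)\sigma(y)\bigr)^{m(x,y)} = e$, so the substitution $x \mapsto \sigma(x)$ extends to a group automorphism of $W_\Gamma$. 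The map $\tilde\sigma$ is a graph automorphism of $\C_\Gamma$ since it sends the edge $\{v,vx\}$ to $\{\tilde\sigma(v),\tilde\sigma(v)\sigma(x)\}$, which is again an edge of $\C_\Gamma$. From this description $\tilde\sigma$ fixes $e$ and has constant local action $\sigma(\tilde\sigma, v) = \sigma$ at every vertex, so $\tilde\sigma \in H$.

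Next, I would check that $\Phi$ is an injective homomorphism. A direct computation on words gives $\widetilde{\sigma\tau}(x_1\cdots x_n) = (\sigma\tau)(x_1)\cdots(\sigma\tau)(x_n) = \tilde\sigma\bigl(\tilde\tau(x_1\cdots x_n)\bigr)$, so $\Phi(\sigma\tau) = \Phi(\sigma)\Phi(\tau)$. Injectivity is immediate: if $\tilde\sigma = \id_{\C_\Gamma}$, then $\sigma(x) = \tilde\sigma(x) = x$ for every generator $x \in V\Gamma$, so $\sigma = \id_\Gamma$.

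The heart of the argument, and the step I expect to require the most care, is surjectivity of $\Phi$ onto $H$, because it implies that $H$ is already a subgroup isomorphic to $\Aut(\Gamma)$ and the statement about the generated subgroup follows at once. Let $\alpha \in H$ and let $\sigma := \sigma(\alpha, v)$ be the (constant) local action of $\alpha$. For any edge $\{v, vx\}$ of $\C_\Gamma$, the image edge is incident to $\alpha(v)$ and is labelled by $\sigma(x)$, so $\alpha(vx) = \alpha(v)\sigma(x)$. Iterating this identity along a word $(x_1, \dots, x_n)$, starting from $\alpha(e) = e$, yields
\begin{equation*}
\alpha(x_1\cdots x_n) = \sigma(x_1)\cdots \sigma(x_n) = \tilde\sigma(x_1\cdots x_n),
\end{equation*}
so $\alpha = \tilde\sigma = \Phi(\sigma)$. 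The well-definedness of $\tilde\sigma$ on the cosets defining $W_\Gamma$, invoked implicitly in the first step, is the only nontrivial point, and it is precisely the reason why the hypothesis that $\sigma$ be weight-preserving (rather than merely a graph automorphism of the underlying unweighted $\Gamma$) is essential.
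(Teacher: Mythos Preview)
Your proof is correct and reaches the same conclusion as the paper, but by a genuinely different (and cleaner) route. The paper constructs the automorphism $\alpha$ corresponding to $\sigma$ \emph{geometrically}: it sets $\alpha(e)=e$, then extends $\alpha$ inductively over balls $B(e,n)$ by following paths in $\C_\Gamma$ and relabelling edges via $\sigma$, and must then invoke Theorem~\ref{Tits_theorem} (Tits' solution to the word problem) to show that the result is independent of the chosen path. You instead observe that since $\sigma$ preserves weights it preserves the defining relations of the presentation~\eqref{equation_defining_graph}, so the substitution $x\mapsto\sigma(x)$ extends to a group automorphism $\tilde\sigma$ of $W_\Gamma$ by the universal property; well-definedness is then immediate and no appeal to Tits' theorem is needed. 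Your surjectivity argument, iterating $\alpha(vx)=\alpha(v)\sigma(x)$ from $\alpha(e)=e$, is also more transparent than the paper's, which leaves this step as ``easy to check''. The paper's geometric construction has the virtue of making the link with the later notion of legal configurations of local actions more visible, but for the present lemma your algebraic approach is shorter and more self-contained.
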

\begin{proof}
Given $\sigma\in \Aut(\Gamma)$ we construct an almost translation $\alpha$ that fixes the identity vertex and such that $\sigma(\alpha,v)=\sigma$ for any vertex $v\in \C_\Gamma$, and we will then prove that the map $\sigma\mapsto \alpha$ is an isomorphism.

The construction is clear if $\sigma=\id_\Gamma$ is the identity automorphism, because we choose $\alpha$ to be the identity automorphism of $\C_\Gamma$. Hence, suppose $\sigma\neq \id_\Gamma$.
Incident to any vertex there are exactly $\lvert V\Gamma\rvert$ edges, each one being labelled by a different vertex of $\Gamma$.

Let us define the automorphism $\alpha$. 
First of all, we impose that $\alpha(e)=e$. If $w\in B(e,1)\setminus\{e\}$ is a vertex such that the edge $\{e,w\}$ is labelled by the letter $x$ and $\sigma(x)=y\in V\Gamma$, then we define $\alpha(w):= \tilde w$, where $\{e,\tilde w\}$ is the (unique) edge in $\C_\Gamma$ incident to $e$ that is labelled by $y$. Notice that $\tilde w\in B(e,1)\setminus \{e\}$.
This just means that $\sigma(\alpha,e)=\sigma$.

To complete the definition of $\alpha$ we proceed as follows, inductively. 
Let $w\in \C_\Gamma$ be a vertex at distance $l\geqslant 1$ from $e$, consider a path $\gamma$ with vertices $(e,w_1,w_2,\dots,w_l=w)$ of length $l$ that connects $e$ to $w$ in $\C_\Gamma$, and suppose that $\alpha$ has already been defined on $B(e,l-1)$. Therefore $w_{l-1}$ is a vertex at distance $l-1$ from $e$, and by inductive hypothesis $\alpha(w_{l-1})$ has been defined. Then $\alpha(w)$ is defined to be the (unique) vertex such that, if the edge $\{w_{l-1},w_l\}$ has label $x$, then the edge $\{\alpha(w_{l-1}),\alpha(w)\}$ has label $\sigma(x)$.

We claim that this is well-defined, that is the assignment does not depend on the path connecting $e$ to $w$. Indeed, consider another path $\gamma'=(e,w_1',w_2',\dots,w_l'=w)$ of length $l$ connecting $e$ to $w$ in $\C_\Gamma$, let $w_\gamma$ and $w_{\gamma'}$ be the two vertices obtained when considering $\gamma$ and $\gamma'$. We want to prove that $w_\gamma=w_{\gamma'}$. 

The claim is clear if $w_{l-1}=w_{l-1}'$, that is if the two paths share the last edge. It can also be shown that if the word that can be read on $\gamma'\circ \gamma^{-1}$ is of the form $(xy)^l$ for some $\{x,y\}\in E\Gamma$, then $w_\gamma=w_{\gamma'}$.
For the general case, suppose that the last edge of $\gamma$ and $\gamma'$ are different. From Theorem~\ref{Tits_theorem} we know that the words read on $\gamma$ and $\gamma'$ can be obtained from one another by applying finitely many braid relations. In particular, the label $x$ of $\{w_{l-1},w\}$ and the label $y$ of $\{w_{l-1}',w\}$ must be joined by an edge in $\Gamma$. Therefore, we can reduce this case to the previous, that is a tail subpath of $\gamma$ and of $\gamma'$ belong to a induced circle of length $2m_{x,y}$ in $\C_\Gamma$, and thus $w_\gamma=w_{\gamma'}$.

That is, we defined a bijection $\alpha$ of the vertices of the Cayley graph which preserves adjacency between vertices, that is an automorphism of the Cayley graph $\C_\Gamma$.

It is now easy to check that $\sigma\mapsto \alpha$ is a homomorphism of groups, and that it is indeed injective and surjective.
\end{proof}
With a slight abuse of notation, we will identify $\Aut(\Gamma)$ with the subgroup of $\A_\Gamma$ consisting of almost translations that fix the identity vertex.

Of course, there is nothing special about the identity vertex, and therefore we have:
\begin{lemma}\label{lemma21}
Let $\Gamma$ be a graph, $\sigma\in \Aut(\Gamma)$, and $v$ be a vertex in $\C_\Gamma$. Then there exists an automorphism $\alpha \in \A_\Gamma$ such that $\alpha(v)=v$ and such that $\sigma(\alpha,w)=\sigma$ for all vertices $w$ in $\C_\Gamma$.
\end{lemma}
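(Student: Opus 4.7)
The plan is to reduce the general case to the already-established special case $v=e$ from Lemma~\ref{autGamma_inclusion} by conjugating with a translation. Concretely, let $\alpha_0\in \A_\Gamma$ be the automorphism associated to $\sigma$ by Lemma~\ref{autGamma_inclusion}, so that $\alpha_0(e)=e$ and $\sigma(\alpha_0,w)=\sigma$ for every vertex $w$. Then set
\begin{equation*}
    \alpha := L_v\circ \alpha_0 \circ L_{v^{-1}},
\end{equation*}
where $L_v,L_{v^{-1}}\in W_\Gamma\subseteq \A_\Gamma$ are the translations from Lemma~\ref{label_preserving_autos_fg}. Clearly $\alpha(v)=L_v\bigl(\alpha_0(e)\bigr)=L_v(e)=v$, so the fixed point condition is automatic.

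The main step is then to verify that the local action $\sigma(\alpha,w)$ is constantly equal to $\sigma$. For this I would first record the general composition rule
\begin{equation*}
    \sigma(\beta\circ\gamma, w) = \sigma\bigl(\beta,\gamma(w)\bigr)\circ \sigma(\gamma,w),
\end{equation*}
which is immediate from unwinding the definition of local action: if $\{w,w'\}$ has label $x$, then $\{\gamma(w),\gamma(w')\}$ has label $\sigma(\gamma,w)(x)$, and applying $\beta$ multiplies by $\sigma(\beta,\gamma(w))$. Applied iteratively to $L_v\circ\alpha_0\circ L_{v^{-1}}$, and using that translations have trivial local action at every vertex (Definition~\ref{definition_translation}) while $\alpha_0$ has constant local action $\sigma$, I get
\begin{equation*}
    \sigma(\alpha,w) = \id_\Gamma\circ \sigma \circ \id_\Gamma = \sigma
\end{equation*}
for every vertex $w$, as required.

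There is no real obstacle here: the only thing that needs a small check is the composition formula for local actions, which is a direct consequence of the definition and the fact that graph automorphisms send labelled edges to labelled edges in a way controlled precisely by $\sigma(\cdot,\cdot)$. Everything else is bookkeeping. In particular, we do not need to re-do the delicate well-definedness argument of Lemma~\ref{autGamma_inclusion}, since that lemma is being invoked as a black box.
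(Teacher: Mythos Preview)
Your proposal is correct and matches the paper's approach: the paper states the lemma without a separate proof, remarking only that ``there is nothing special about the identity vertex'' and then, in the paragraph immediately following, explicitly writes the relevant automorphism as $\alpha = L_v \beta L_{v^{-1}}$ with $\beta\in\Aut(\Gamma)$. Your argument is precisely this conjugation-by-translation idea, with the cocycle formula for local actions spelled out; it supplies the details the paper leaves implicit.
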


Any almost translation that fixes a vertex in $\C_\Gamma$ is conjugated to an element of $\Aut(\Gamma)$, that is to an almost translation that fixes the identity vertex. Indeed, if $\alpha$ is such that $\alpha(v)=v$ for some $v\in \C_\Gamma$ then
\begin{equation*}
\alpha=L_{v}\beta L_{v^{-1}},    
\end{equation*} 
where $R_v\in W_\Gamma$ and $\beta\in\Aut(\Gamma)\leqslant \A_\Gamma$. Moreover, if $w$ is another fixed point of $\alpha$ then $ L_v \beta L_{v^{-1}} = L_w \beta L_{w^{-1}}$.
We have that $\sigma(\alpha,v)= \sigma(\beta, e)$.

In the next corollary we maintain the notation of Lemma \ref{lemma21}.
\begin{corollary}\label{corollary21}
Let $\Gamma$ be a graph, $\sigma\in \Aut(\Gamma)$, and $v$ be a vertex in $\C_\Gamma$. The automorphism $\alpha$ fixes pointwise the set
\begin{equation*}
\Bigl\{w\in V\Gamma\mid
\begin{array}{c}
\hbox{there is a path in $\C_\Gamma$ connecting $w$ to $v$}\\
\hbox{whose edges are all labelled by fixed points of $\sigma$}
\end{array}
\Bigr\}.  
\end{equation*}
\end{corollary}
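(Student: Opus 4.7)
The plan is to prove the corollary by induction on the length of a path from $v$ to $w$ whose edges are all labelled by fixed points of $\sigma$. The base case is $w = v$, which is handled by the hypothesis $\alpha(v) = v$ from Lemma~\ref{lemma21}.

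For the inductive step, suppose $w \in V\Gamma$ is connected to $v$ by a path $v = w_0, w_1, \dots, w_k = w$ in $\C_\Gamma$ all of whose edge labels $x_1, \dots, x_k \in V\Gamma$ satisfy $\sigma(x_i) = x_i$. By inductive hypothesis $\alpha(w_{k-1}) = w_{k-1}$. Since $\alpha$ is a graph automorphism, it sends the edge $\{w_{k-1}, w_k\}$ to the edge $\{\alpha(w_{k-1}), \alpha(w_k)\} = \{w_{k-1}, \alpha(w_k)\}$. By the definition of the local action, the label of this image edge is $\sigma(\alpha, w_{k-1})(x_k)$, and by construction of $\alpha$ in Lemma~\ref{lemma21} we have $\sigma(\alpha, w_{k-1}) = \sigma$. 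Since $x_k$ is a fixed point of $\sigma$, the image edge carries label $x_k$ as well. Because each vertex of $\C_\Gamma$ has exactly one incident edge per label in $V\Gamma$, the unique neighbour of $w_{k-1}$ along the $x_k$-labelled edge is $w_k$ itself, forcing $\alpha(w_k) = w_k$.

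The main (very mild) subtlety is just to make the inductive step use the correct local action at the already-fixed vertex $w_{k-1}$ rather than at $v$; but since the automorphism $\alpha$ from Lemma~\ref{lemma21} is an almost translation with constant local action equal to $\sigma$, this is automatic. No further argument is needed, and the statement follows.
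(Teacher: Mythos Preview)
Your argument is correct and is exactly the intended one: the paper states this as a corollary with no proof, treating it as immediate from Lemma~\ref{lemma21}, and your induction on path length using that $\alpha$ has constant local action $\sigma$ is precisely the unwinding of that immediacy.
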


\begin{lemma}\label{almtranslations_fg}
The subgroup of $\A_\Gamma$ generated by almost translations is isomorphic to $W_\Gamma\rtimes \Aut(\Gamma)$. In particular, if the simplicial graph $\Gamma$ is finite, then $W_\Gamma\rtimes \Aut(\Gamma)$ is finitely generated.
\end{lemma}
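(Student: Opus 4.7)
The plan is to identify the subgroup generated by almost translations with the internal product $W_\Gamma \cdot \Aut(\Gamma)$ inside $\A_\Gamma$, where $W_\Gamma$ is understood as the subgroup of translations (Lemma \ref{label_preserving_autos_fg}) and $\Aut(\Gamma)$ as the subgroup of almost translations fixing $e$ (Lemma \ref{autGamma_inclusion}). Then the semidirect product decomposition will follow from three separate checks: (i) triviality of intersection, (ii) normality of $W_\Gamma$, and (iii) existence of a factorization for every almost translation.

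First I would record the key compositional behaviour of local actions, namely that
\begin{equation*}
\sigma(\alpha\circ\beta,v)=\sigma\bigl(\alpha,\beta(v)\bigr)\circ\sigma(\beta,v)
\end{equation*}
for all $\alpha,\beta\in\A_\Gamma$ and $v\in W_\Gamma$; this is immediate from the definition of local action. For (iii), given an arbitrary almost translation $\alpha$ with constant local action $\sigma$, I set $v:=\alpha(e)\in W_\Gamma$ and form $\beta:=L_{v^{-1}}\circ\alpha$. Since $L_{v^{-1}}$ is a translation, it has trivial local action everywhere, so the composition formula shows that $\sigma(\beta,w)=\sigma$ for all $w$. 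In particular $\beta$ is an almost translation, and $\beta(e)=v^{-1}v=e$, so $\beta\in\Aut(\Gamma)\leqslant\A_\Gamma$ and $\alpha=L_v\circ\beta$. This also proves that almost translations form a group, and it identifies this group set-theoretically with $W_\Gamma\cdot\Aut(\Gamma)$.

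For (i), any element in $W_\Gamma\cap\Aut(\Gamma)\subseteq\A_\Gamma$ is a translation that fixes $e$; by Lemma \ref{label_preserving_autos_fg} it must then be $\id$. For (ii), I compute, for $\sigma\in\Aut(\Gamma)\leqslant\A_\Gamma$ and $L_w\in W_\Gamma$, the local action of $\sigma L_w\sigma^{-1}$ at any vertex $v$ using the composition formula:
\begin{equation*}
\sigma\bigl(\sigma L_w\sigma^{-1},v\bigr)=\sigma\circ\id_\Gamma\circ\sigma^{-1}=\id_\Gamma,
\end{equation*}
so $\sigma L_w\sigma^{-1}$ is a translation; evaluating at $e$ gives $\sigma L_w\sigma^{-1}(e)=\sigma(w)$, so by Lemma \ref{label_preserving_autos_fg} we get $\sigma L_w\sigma^{-1}=L_{\sigma(w)}$. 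Hence $\Aut(\Gamma)$ normalises $W_\Gamma$, and the action it induces on $W_\Gamma$ is the natural one extending the weight-preserving permutation of $V\Gamma$. Combining (i)--(iii) yields the internal semidirect product $W_\Gamma\rtimes\Aut(\Gamma)$ as the group generated by almost translations.

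The main potential obstacle is the bookkeeping in (ii) — making sure that the identification of $\Aut(\Gamma)$ as a subgroup of $\A_\Gamma$ is compatible, under conjugation, with the natural action of $\Aut(\Gamma)$ on the generating set $V\Gamma$ used to build the semidirect product. Once this is in place, the final sentence is immediate: when $\Gamma$ is a finite graph, $W_\Gamma$ is finitely generated by $V\Gamma$ and $\Aut(\Gamma)$ is a subgroup of the finite symmetric group $\Sym(V\Gamma)$, hence finite; therefore their semidirect product is finitely generated.
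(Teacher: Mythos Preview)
Your proof is correct and follows essentially the same route as the paper's: both arguments verify the three ingredients (trivial intersection, factorisation $\alpha=L_{\alpha(e)}\cdot\bigl(L_{\alpha(e)}^{-1}\alpha\bigr)$, and normality of $W_\Gamma$) and then read off the semidirect product. The one cosmetic difference is that you first isolate the cocycle identity $\sigma(\alpha\beta,v)=\sigma(\alpha,\beta(v))\circ\sigma(\beta,v)$ and use it systematically, whereas the paper leaves this implicit and checks normality for a general almost translation rather than just for elements of $\Aut(\Gamma)$; neither difference is substantive.
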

\begin{proof}
It is clear that $W_\Gamma$ and $\Aut(\Gamma)$, that is the subgroup of translations and the subgroup of almost translation fixing the identity vertex, intersect trivially.

Let us now consider an almost translation $\alpha$ that does not fix any vertex in $\C_\Gamma$, suppose it is not a translation, that is $\sigma(\alpha,-)\neq \id_\Gamma$, so that $\alpha(e)\neq e$. Let us consider the translation $L_{\alpha(e)}\in W_\Gamma$ that maps the vertex $e$ to the vertex $\alpha(e)$. Then we have that
\begin{equation*}
    \alpha= \bigl(L_{\alpha(e)}\cdot (L_{\alpha(e)})^{-1}\bigr)\cdot\alpha = L_{\alpha(e)}\cdot \bigl( (L_{\alpha(e)})^{-1}\cdot\alpha \bigr).
\end{equation*}
Notice that $(L_{\alpha(e)})^{-1}\cdot\alpha$ is an almost translation, and by construction it fixes the identity vertex, that is, it is an element of $\Aut(\Gamma)$. Thus, any almost translation in $\A_\Gamma$ can be expressed as a product of an element from $W_\Gamma$ and an element from $\Aut(\Gamma)$.

Moreover, the subgroup $W_\Gamma$ of translation is normal in the subgroup of $\A_\Gamma$ of almost translations. Indeed, if $\alpha$ is an almost translation and $L_w\in W_\Gamma$, then $\alpha^{-1}L_w\alpha = L_{\alpha(w)}$.

Therefore, the subgroup of almost translations splits as the semidirect product $W_\Gamma\rtimes \Aut(\Gamma)$.
If the graph $\Gamma$ is finite, then $W_\Gamma$ is finitely generated and $\Aut(\Gamma) $ is finite, so that $W_\Gamma\rtimes \Aut(\Gamma)$ must be finitely generated as well. So, the lemma is proved.
\end{proof}

We finish this section with three examples, the first of a two-ended right-angled Coxeter group, the second of a one-ended right-angled Coxeter group, and the third of a infinitely-ended one. The first two examples have Cayley graphs with countable automorphism group, whereas the third has a Cayley graph with uncountable automorphism group.

\begin{example}
As an example of two-ended right-angled Coxeter group, let us consider the graph $\Gamma$
\begin{equation*}
\begin{tikzpicture}
\draw[thick] (-.5,0) --(.5,-.5) -- (1.5,0) -- (.5,.5) -- (-.5,0)

(.5,.5) -- (.5,-.5);

\draw[fill] 
(-.5,0) circle [radius=0.035]
(.5,-.5) circle [radius=0.035]
(1.5,0) circle [radius=0.035]
(.5,.5) circle [radius=0.035];
\end{tikzpicture}
\end{equation*}
that is a complete graph on four vertices with an edge removed, all of whose edges have weight two.
Its Cayley graph is
\begin{equation*}
\begin{tikzpicture}
    \foreach \i in {-3,...,3} {
\draw[thick,red] (\i,.5)--(\i,-.5);}
\foreach \i in {-3,-1,1} {
\draw[thick,olive] (\i,.5)--(\i+1,.5);
\draw[thick,olive] (\i,-.5)--(\i+1,-.5);}
\foreach \i in {-2,0,2} {
\draw[thick] (\i,.5)--(\i+1,.5);
\draw[thick] (\i,-.5)--(\i+1,-.5);}
\draw[thick, loosely dotted] (-2.9,-.4) --(-4,-.4);
\draw[thick, loosely dotted] (3.2,-0.4) --(4,-.4);

\foreach \i in {-3,...,3} {
\draw[thick,red] (.3+\i,.1)--(.3+\i,-1.1);}
\foreach \i in {-3,-1,1} {
\draw[thick,olive] (.3+\i,.1)--(1.3+\i,.1);
\draw[thick,olive] (.3+\i,-1.1)--(\i+1.3,-1.1);}
\foreach \i in {-2,0,2} {
\draw[thick] (.3+\i,.1)--(\i+1.3,.1);
\draw[thick] (.3+\i,-1.1)--(\i+1.3,-1.1);}

\foreach \i in {-3,...,3} {
\draw[thick,teal] 
(\i,.5)--(\i+.3,.1)
(\i,-.5)--(\i+.3,-1.1);
}

\filldraw (0,0.5) circle[radius=0.035] 
(1,0.5) circle[radius=0.035]
(2,0.5) circle[radius=0.035]
(3,0.5) circle[radius=0.035]
(-1,0.5) circle[radius=0.035]
(-2,0.5) circle[radius=0.035]
(-3,0.5) circle[radius=0.035]

(0,-0.5) circle[radius=0.035] 
(1,-0.5) circle[radius=0.035] 
(2,-0.5) circle[radius=0.035]
(3,-0.5) circle[radius=0.035]
(-1,-0.5) circle[radius=0.035] 
(-2,-0.5) circle[radius=0.035]
(-3,-0.5) circle[radius=0.035]

(0.3,.1) circle[radius=0.035] 
(1.3,.1) circle[radius=0.035] 
(2.3,.1) circle[radius=0.035]
(3.3,.1) circle[radius=0.035]
(-.7,.1) circle[radius=0.035] 
(-1.7,.1) circle[radius=0.035]
(-2.7,.1) circle[radius=0.035]

(0.3,-1.1) circle[radius=0.035]
(1.3,-1.1) circle[radius=0.035]
(2.3,-1.1) circle[radius=0.035]
(3.3,-1.1) circle[radius=0.035]
(-.7,-1.1) circle[radius=0.035] 
(-1.7,-1.1) circle[radius=0.035]
(-2.7,-1.1) circle[radius=0.035];
\end{tikzpicture}
\end{equation*}
and the automorphism $\A_\Gamma$ is countable by Proposition \ref{prop_countable}.

\end{example}

\begin{example}\label{example_goodsepset}
Suppose that $\Gamma=C_4$ is a cycle of lenght four all of whose edges have weight two (that is, the corresponding Coxeter group is right-angled), and $\sigma\in\Aut(\Gamma)$ is the reflection along the dashed axis
\begin{equation*}
\begin{tikzpicture}
\draw[thick] 
(0,0) --(.5,.5) 
(1,0) --(.5,-.5);
\draw[thick] 
(.5,.5) --(1,0) 
(0,0) -- (.5,-.5);
\draw[dashed] (0,-.5)--(1,.5);
\draw[fill,red] 
(0,0) circle [radius=0.035];
\draw[fill,blue] 
(1,0) circle [radius=0.035];
\draw[fill] 
(.5,.5) circle [radius=0.035];
\draw[fill,olive] 
(.5,-.5) circle [radius=0.035];
\end{tikzpicture}
\end{equation*}
Let $v$ be a vertex in $\C_\Gamma$ (in the picture $v$ is at distance one from the identity vertex), which in this case is the infinite two-dimensional grid depicted in the following picture. The automorphism $\sigma$ induces the automorphism $ \alpha\in \C_\Gamma$, which is the reflection along the dashed diagonal passing through $v$, such that $\sigma(\alpha,v)=\sigma$.
\begin{figure}[ht!]
\begin{tikzpicture}
    \foreach \i in {-2,...,2} {
\draw[thick,blue] (\i,0)--(\i,-1);

\draw[thick,blue] (\i,2)--(\i,1);

\draw[thick,red] (\i,1)--(\i,0);

\draw[thick,red] (\i,3)--(\i,2);
}

\foreach \i in {-1,...,3} {
\draw[thick] (0,\i) --(1,\i);

\draw[thick] (-2,\i) --(-1,\i);

\draw[thick,olive] (1,\i) --(2,\i);

\draw[thick,olive] (-1,\i) --(0,\i);
}

\foreach \i in {-2,...,2}
\foreach \j in {-1,...,3} {
\filldraw (\i,\j) circle[radius=0.035];}

\draw[dashed] (-2.5,-1.5)--(2.5,3.5);
\draw[dashed] (2,-2)--(-2.5,2.5);

\draw[thick, loosely dotted] (-2.2,1) --(-3,1);
\draw[thick, loosely dotted] (2.2,1) --(3,1);
\draw[thick, loosely dotted] (0,-1.2) --(0,-2);
\draw[thick, loosely dotted] (0,3.2) --(0,4);

\filldraw (0,0) node[anchor=south east] {\small $e$};
\filldraw (0,1)  node[anchor=south east] {\small $v$};
\filldraw (0,2);
\filldraw (0,-1);

\filldraw (1,0);
\filldraw (1,1);
\filldraw (1,2);
\filldraw (1,-1);

\filldraw (-1,0);
\filldraw (-1,1);
\filldraw (-1,2);
\filldraw (-1,-1);

\filldraw (-2,0);
\filldraw (2,0);
\end{tikzpicture}
\end{figure}
If $\beta$ is the reflection along the dashed diagonal passing through $e$, then $\alpha=L_v \beta L_{v^{-1}}$. The axes of the two reflexions $\alpha$ and $\beta$ are perpendicular because the automorphism $L_v$ fixes the edge $\{e,v\}$ inverting the two vertices, that is $L_v(e)=v$ and $L_v(v)=e$. Notice that $\sigma(\alpha,v)= \sigma(\beta,e)$.

More generally, if we consider a cycle $C_n$ of length $n\geqslant 4$, we obtain a tessellation of the plane by squares.

In the Cayley graph $\Cay(C_5)$ at any vertex $v$ there are five, that is $\lvert V{C_5}\rvert$, incident vertices that form commuting squares (corresponding to edges of the cycle $C_5$):
\begin{equation*}
\begin{tikzpicture}
\draw[thick] (-1,0) --(0,0) --(.4,{ sqrt(15)/4}) -- (-.6,{ sqrt(15)/4}) --(-1,0)

(0,0) -- (.7,{ -{sqrt(48)}/7}) --(-.3,{ -{sqrt(48)}/7}) --(-1,0)

(-1,0) --(-1.7,{ -{sqrt(48)}/7}) --(-1,{ -{2* sqrt(48)}/7}) --(-.3,{ -{sqrt(48)}/7})

(-1,0) --({-sqrt(8)/3 -1},.3) -- ({-sqrt(8)/3 -.6},{ sqrt(15)/4 +.3}) -- (-.6,{ sqrt(15)/4})

(-1.7,{ -{sqrt(48)}/7}) --({-sqrt(8)/3 -1.7}, { -{sqrt(48)}/7 +.3}) --({-sqrt(8)/3 -1},.3)
;

\draw[fill] 
(-1,0) circle [radius=0.035] node[yshift=.35cm ,xshift=-.15cm] {$v$}
(0,0) circle [radius=0.035]
(.4,{ sqrt(15)/4}) circle [radius=0.035]
(-.6,{ sqrt(15)/4}) circle [radius=0.035]
(.7,{ -{sqrt(48)}/7}) circle [radius=0.035]
(-.3,{ -{sqrt(48)}/7}) circle [radius=0.035]
(-1.7,{ -{sqrt(48)}/7}) circle [radius=0.035]
(-1,{ -{2* sqrt(48)}/7}) circle [radius=0.035]
({-sqrt(8)/3 -1},.3) circle [radius=0.035]
({-sqrt(8)/3 -.6},{ sqrt(15)/4 +.3}) circle [radius=0.035]
({-sqrt(8)/3 -1.7}, { -{sqrt(48)}/7 +.3}) circle [radius=0.035];
\end{tikzpicture}
\end{equation*}
This pattern covers a plane and the automorphism group of such Cayley graphs is finitely generated, and therefore countable, as we will deduce from Proposition \ref{prop_countable}.
\end{example}

\begin{example}\label{example_part1}
Let us now consider the disconnected graph~$\Gamma$
\begin{equation*}
\begin{tikzpicture}
\draw[thick] 
(0,-.5) --(0,.5);
\draw[fill] 
(0,-.5) circle [radius=0.035];
\draw[fill] 
(0,.5) circle [radius=0.035];
\draw[fill] 
(1,0) circle [radius=0.035];
\end{tikzpicture}
\end{equation*}
where the only edge has weight two.
Let $\Gamma_1$ denote the connected component with two vertices and one edge, and let $\Gamma_2$ be the connected component consisting of one vertex. We have that $\Aut(\Gamma_1)=\{e,\sigma\}$ is the cyclic group of order two and that $\C_{\Gamma_1})$ is a commuting square.
Let $v$ be a vertex of $\C_{\Gamma_1})$. In view of Corollary~\ref{corollary21}, the automorphism $\sigma$ (which switches the two vertices of $\Gamma_1$) induces an automorphism $\alpha\in \Aut\bigl(\C_{\Gamma_1})\bigr)$ fixing the vertex $v$, the reflection along the following dashed line (and along a diagonal in each square in the Cayley graph):
\begin{equation*}
\begin{tikzpicture}
\draw[thick] 
(.5,-.5) --(-.5,-.5) 
(-.5,.5) -- (.5,.5)
(.5,.5) --(.5,-.5) 
(-.5,-.5) --(-.5,.5);
\draw[dashed,thick] (-.5,-.5)--(.5,.5);
\draw[fill] (.5,.5) circle [radius=0.035] node[anchor=south] {$v$}
(-.5,.5) circle [radius=0.035] 
(-.5,-.5) circle [radius=0.035] 
(.5,-.5) circle [radius=0.035];
\end{tikzpicture}
\end{equation*}
Colouring each generator of $W_\Gamma$ with a different colour (blue for the vertex in $\Gamma_2$ red and olive for the two vertices in $\Gamma_1$), we have the following situation
\begin{equation*}
\begin{tikzpicture}
\draw[dashed] 
(-.5,-.5)--(.5,.5);

\draw[thick,red] 
(.5,.5) --(.5,-.5) 
(-.5,-.5) --(-.5,.5)
(-1,-1) --(-1.5,-1.5)
(-1,-2) --(-.5,-1.5)
(1,-2) --(1.5,-1.5)
(1,-1) --(.5,-1.5)
(-3,-1) --(-3.5,-1.5)
(-3,-2) --(-2.5,-1.5)
(3,-1) --(2.5,-1.5)
(3,-2) --(3.5,-1.5);

\draw[thick,olive]
(-.5,.5) -- (.5,.5)
(.5,-.5) --(-.5,-.5)
(-1.5,-1.5) --(-1,-2)
(-.5,-1.5) --(-1,-1)
(.5,-1.5) --(1,-2)
(1.5,-1.5) --(1,-1)
(-3.5,-1.5) --(-3,-2)
(-2.5,-1.5) --(-3,-1)
(2.5,-1.5) --(3,-2)
(3.5,-1.5) --(3,-1)
;

\draw[thick,blue] 
(-.5,-.5) cos(-1,-1) 
(.5,-.5) cos(1,-1) 
(-.5,.5) cos(-3,-1) 
(.5,.5) cos(3,-1);

\draw[blue,thick]
(-3.5,-1.5) --(-3.8,-2.5)
(-3,-2) --(-3,-2.5)
(-2.5,-1.5) --(-2.2,-2.5)

(-1.5,-1.5) --(-1.8,-2.5)
(-1,-2) --(-1,-2.5)
(-.5,-1.5) --(-.2,-2.5)

(.5,-1.5) --(.2,-2.5)
(1,-2) --(1,-2.5)
(1.5,-1.5) --(1.8,-2.5)

(2.5,-1.5) --(2.2,-2.5)
(3,-2) --(3,-2.5)
(3.5,-1.5) --(3.8,-2.5);

\draw[loosely dotted, thick] (-3,-2.8)--(-3,-3.5)
(-1,-2.8)--(-1,-3.5)
(1,-2.8)--(1,-3.5)
(3,-2.8)--(3,-3.5);

\draw[fill] (.5,.5) circle [radius=0.035] node[anchor=south] {\small $v$}
(-.5,.5) circle [radius=0.035] 
(-.5,-.5) circle [radius=0.035] 
(.5,-.5) circle [radius=0.035] 
(-3,-1) circle [radius=0.035] 
(-1,-1) circle [radius=0.035] 
(1,-1) circle [radius=0.035] 
(3,-1) circle [radius=0.035] 
(-2.5,-1.5) circle [radius=0.035] 
(-1.5,-1.5) circle [radius=0.035] 
(-.5,-1.5) circle [radius=0.035] 
(.5,-1.5) circle [radius=0.035]
(2.5,-1.5) circle [radius=0.035]
(3.5,-1.5) circle [radius=0.035]
(-3,-2) circle [radius=0.035]
(-1,-2) circle [radius=0.035]
(1,-2) circle [radius=0.035]
(3,-2) circle [radius=0.035]
(-3,-2.5) circle [radius=0.035]
(-1,-2.5) circle [radius=0.035]
(1,-2.5) circle [radius=0.035]
(3,-2.5) circle [radius=0.035]
(-3.8,-2.5) circle [radius=0.035]
(3.8,-2.5) circle [radius=0.035]
(2.2,-2.5) circle [radius=0.035]
(-1.8,-2.5) circle [radius=0.035]
(-.2,-2.5) circle [radius=0.035]
(1.8,-2.5) circle [radius=0.035];

\draw[fill,orange] 
(-3.5,-1.5) circle [radius=0.035] 
node[anchor=south] {\small ${\textcolor{black}a}$}
(1.5,-1.5) circle [radius=0.035]
node[xshift=.3cm,yshift=.3cm] {\small ${\textcolor{black}{\alpha(a)}}$}
(-2.2,-2.5) circle [radius=0.035]
node[anchor=north] {\small ${\textcolor{black}b}$}
(.2,-2.5) circle [radius=0.035]
node[anchor=north] {\small ${\textcolor{black}{\alpha(b)}}$};
\end{tikzpicture}
\end{equation*}
where we depicted the vertices $a$ and $b$ and their respective images under the automorphism $\alpha$.
\end{example}

\section{Good separating sets}\label{section3}
In this section we prove one implication of Theorem~\ref{thmA}. A fundamental notion allowing us to do so is the one of a \emph{good separating set}.

\begin{definition}\label{goodseparatingset}
Let  $S\subsetneq \Gamma$ be a proper separating set, so that $\Gamma\setminus S=C_1\sqcup \dots \sqcup C_n$ is the disjoint union of $n\geqslant 2$ connected components. Suppose that there exists a non-empty set $ I\subsetneq \{1,\dots,n\}$ with the property that, fixing $\Gamma_1:= S\sqcup\bigsqcup_{i\in I} C_i$ and $\Gamma_2:= S \sqcup \bigsqcup_{i\notin I}C_i$, such that there exists a non-trivial $\alpha\in \Aut(\Gamma_1)$ such that $\alpha\restriction_S=\id_S$.
Then we call $S$ a \emph{good separating set}.
\end{definition}
Notice that $S$ can be the empty set. For instance, the only good separating set for the graph considered in Example~\ref{example_part1} is the empty set. The idea behind the definition is the following. In the presence of a proper separating set~$S$, the Coxeter group $W_\Gamma$ splits properly as the amalgamated product $W_\Gamma= W_{\Gamma_1}\ast_{W_S}W_{\Gamma_2}$, where $\Gamma_1$ and $\Gamma_2$ are given as in Definition \ref{goodseparatingset}. If the separating set $S $ is a good separating set, then we can permute non-trivially the canonical generators of $\Gamma_1$, fixing pointwise the ones of $W_S$. The geometric intuition is that we are able to produce non-trivial automorphisms of any coset of $W_{\Gamma_1}$ inside (the Cayley graph of) $W_\Gamma$ that extend to the rest of the graph $\C_\Gamma$. In Corollary~\ref{cor:uncountable stabilisers} we will exploit this idea to prove that in the presence of a good separating set the automorphism of the Cayley graph is uncountable.

We can characterise good separating sets in terms of stars of vertices in the graph $\Gamma$. In the following lemma we maintain the notation of Definition~\ref{goodseparatingset}.
\begin{lemma}\label{char_good_sep_set}
Let $\Gamma$ be a finite simplicial graph. The following conditions are equivalent:
\begin{enumerate}
\item there exists a vertex $v\in V\Gamma$ and a non-trivial $\alpha\in\Aut(\Gamma)$ such that $\alpha\restriction_{\star(v)}=\id_\Gamma$;
\item $\Gamma$ admits a good separating set;
\item there exist distinct elements $\alpha,\beta \in \Aut(\Gamma_1)$ such that $\alpha\restriction_S=\beta\restriction_S$.
\end{enumerate}
\begin{proof}
Suppose that there exists a vertex $v\in V\Gamma$ such that $\alpha\restriction_{\star(v)}=\id$. As $\alpha\neq \id_\Gamma$, we have that $\Gamma\setminus \star(v)\neq \emptyset$. Therefore, $\link(v)$ is a good separating set, and the first condition implies the second.

On the other hand, suppose that there exists a good separating set $S$ with non-trivial associated automorphism $\alpha\in\Aut(\Gamma_1)$ such that $\alpha\restriction_S=\id$. For any vertex $v\in \Gamma_2\setminus S$ we have that $\alpha\restriction_{\star(v)}=\id$, that is, the second condition implies the first.

To conclude the proof, the second condition implies the third, where $\alpha$ is the non-trivial automorphism provided by the good separating set $\beta=\id_\Gamma$, and the third condition implies the second considering the non-trivial automorphism $\alpha^{-1}\beta\in\Aut(\Gamma)$.
\end{proof}
\end{lemma}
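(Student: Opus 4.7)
The plan is to prove the equivalence cyclically by establishing $(1) \Rightarrow (2) \Rightarrow (3) \Rightarrow (2) \Rightarrow (1)$, with the two directions involving $(2)$ carrying all the real content.

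For $(1) \Rightarrow (2)$, I would set $S := \link(v)$ and decompose $\Gamma \setminus S = C_1 \sqcup \cdots \sqcup C_n$ into its connected components. Because every neighbour of $v$ lies in $S$, the singleton $\{v\}$ is itself a connected component, which I may label $C_1$. Since $\alpha \neq \id_\Gamma$ but $\alpha|_{\star(v)} = \id$, there is some vertex $w$ with $\alpha(w) \neq w$, and necessarily $w \notin \star(v)$, so $n \geq 2$ and $S$ is indeed a proper separating set. The natural temptation is to take $I = \{1\}$ so that $\Gamma_1 = \star(v)$, but this fails because the only automorphism of $\star(v)$ fixing $\link(v)$ pointwise is trivial; the right choice is the opposite one, $I = \{2,\dots,n\}$, giving $\Gamma_2 = \star(v)$. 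Then $\alpha$ fixes $v$ (since $v \in \star(v)$), hence permutes the components $C_2,\dots,C_n$ among themselves, so $\alpha$ restricts to a non-trivial automorphism of $\Gamma_1 = S \sqcup C_2 \sqcup \cdots \sqcup C_n$ fixing $S$ pointwise. Identifying this correct choice of $I$ is, in my view, the only genuine obstacle in the whole argument.

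For $(2) \Rightarrow (1)$, given a good separating set $S$ with non-trivial $\alpha \in \Aut(\Gamma_1)$ fixing $S$ pointwise, I would extend $\alpha$ to $\widetilde\alpha \in \Aut(\Gamma)$ by declaring $\widetilde\alpha|_{\Gamma_2} = \id_{\Gamma_2}$. This is well-defined on $\Gamma_1 \cap \Gamma_2 = S$ and preserves edges: all edges of $\Gamma$ lie entirely inside $\Gamma_1$ or entirely inside $\Gamma_2$, because $S$ separates. Any vertex $v \in \Gamma_2 \setminus S$ (which exists because $I \subsetneq \{1,\dots,n\}$ leaves at least one component for $\Gamma_2$) then satisfies $\star(v) \subseteq \Gamma_2$, so $\widetilde\alpha|_{\star(v)} = \id_\Gamma$, and $\widetilde\alpha$ is non-trivial on $\Gamma_1$.

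Finally, for $(2) \Leftrightarrow (3)$: the direction $(2) \Rightarrow (3)$ is immediate by taking $\beta = \id_{\Gamma_1}$ and letting $\alpha$ be the non-trivial automorphism supplied by the good separating set. For $(3) \Rightarrow (2)$, the key observation is that if $\alpha,\beta \in \Aut(\Gamma_1)$ satisfy $\alpha|_S = \beta|_S$, then as sets $\alpha(S) = \beta(S)$, so $\alpha^{-1}\beta$ maps $S$ to $S$ and indeed acts as the identity on $S$; since $\alpha \neq \beta$ the composition $\alpha^{-1}\beta$ is a non-trivial element of $\Aut(\Gamma_1)$ fixing $S$ pointwise, exhibiting $S$ as a good separating set. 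This closes the cycle and completes the proof.
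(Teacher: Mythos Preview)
Your proof is correct and follows essentially the same approach as the paper's: $\link(v)$ as the separating set for $(1)\Rightarrow(2)$, picking any $v\in\Gamma_2\setminus S$ for $(2)\Rightarrow(1)$, and the trick $\alpha^{-1}\beta$ for $(3)\Rightarrow(2)$. You are in fact more careful than the paper in two places: you explicitly identify the correct choice of $I$ (so that $\Gamma_2=\star(v)$ rather than $\Gamma_1=\star(v)$), and in $(2)\Rightarrow(1)$ you explicitly extend $\alpha\in\Aut(\Gamma_1)$ to $\widetilde\alpha\in\Aut(\Gamma)$ by the identity on $\Gamma_2$, a step the paper leaves implicit even though condition~(1) requires an automorphism of the whole graph.
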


The following will be the key lemma to prove the main result of this section.
\begin{lemma}\label{keylemma}
Let $\alpha\in\A_\Gamma$, let $v$ be a vertex in $\C_\Gamma$ and $x\in V\Gamma$. Then
\begin{equation*}
    \sigma(\alpha,v)\restriction_{\star(x)}=\sigma(\alpha,vx)\restriction_{\star(x)}.
\end{equation*}
\begin{proof}
Let $y\in \link(x)$, so that in the Cayley graph $\C_\Gamma$ we see an relation cycle of length $2m(x,y)$ incident at $v$ and labelled alternatively by $x$ and~$y$ (in Figure \ref{fig_relationcycle_commutingsquare} the general case on the left-hand side, and the right-angled case, that is $m(x,y)=2$, on the right-hand side).
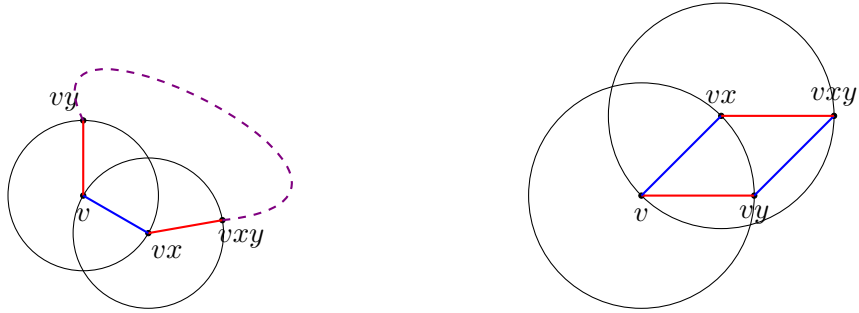
\begin{figure}[ht!]
\begin{tikzpicture}
\draw [ name path    = great circle 1]
        (0,0) circle (1cm);

\path [ name path    = great circle 2]
        (30:1) circle (1cm);
\path [name intersections={of=great circle 1 and great circle 2}] ;
\coordinate (a) at (intersection-1);
\coordinate (b) at (intersection-2);

\draw [ name path    =circleb](b) circle (1cm);

\path [ name path    = great circle 3]
       (70:1)+(b) circle (1cm);
\path [name intersections={of= circleb and great circle 3}];
\path (intersection-1) circle (1cm);

\filldraw (0,0) circle (0.035) node[yshift=-.25cm] {$v$}
(a) circle (0.035) node[yshift=.25 cm,xshift=-.25cm] {$vy$}
(b) circle (0.035) node[yshift=-.25 cm,xshift=.25cm] {$vx$}
(intersection-1) circle (0.035) node[yshift=-.25 cm,xshift=.25cm] {$vxy$}
;

\draw[thick,red] (0,0)-- (a)
(b)-- (intersection-1);
\draw[thick,blue] (0,0)-- (b);
\draw[thick, violet, dashed]
(intersection-1) 
.. controls (5,0) and (-1,3) .. 
(a);
\end{tikzpicture}
\qquad
\begin{tikzpicture}
\draw (0,0) circle (1.5cm);
\draw ({1.5*cos(45)},{(1.5*cos(45))}) circle (1.5cm);

\filldraw (0,0) circle (0.035) node[yshift=-.25cm] {$v$}
(1.5,0) circle (0.035) node[yshift=-.25 cm] {$vy$};

\begin{scope}[rotate=45]
\filldraw (1.5,0) circle (0.035) node[yshift=.25 cm] {$vx$};

\begin{scope}[shift={(1.5,0)},rotate=45]
\filldraw (0,-1.5) circle (0.035) node[yshift=.25 cm] {$vxy$};
\end{scope}
\end{scope}

\draw[thick,red] (0,0) -- (1.5,0);

\begin{scope}[rotate=45]
\draw[thick,blue] (0,0) -- (1.5,0);
\end{scope}

\begin{scope}[shift={(1.5,0)},rotate=45]
\draw[thick,blue] (0,0) -- (1.5,0);
\end{scope}

\begin{scope}[shift={({1.5*cos(45)},{(1.5*cos(45))})}]
\draw[thick,red] (0,0) -- (1.5,0);
\end{scope}
\end{tikzpicture}
\caption{Relation cycle, commuting square.}
\label{fig_relationcycle_commutingsquare}
\end{figure}

\noindent
The edges $\{v,vy\}$ and $\{vx,vxy\}$ are labelled by $y$. Moreover, as $\alpha$ is an automorphism of the graph~$\C_\Gamma$, it must map the relation cycle to a relation cycle of the same length, and in particular $\sigma(\alpha,v)(y)=\sigma(\alpha,vx)(y)$, that is $\sigma(\alpha,v)$ and $\sigma(\alpha,vx)$ coincide on $\link(x)$.

Clearly, we also have that $\sigma(\alpha,v)(x)=\sigma(\alpha,vx)(x)$, because the vertices~$v$ and~$vx$ are joined in $\C_\Gamma$ by the edge labelled by $x$. Therefore the lemma is proved.
\end{proof}
\end{lemma}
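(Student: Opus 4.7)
The plan is to split the verification of $\sigma(\alpha,v)(y) = \sigma(\alpha,vx)(y)$ into two cases according to whether $y = x$ or $y \in \link(x)$, using only that $\alpha$ is an automorphism of $\C_\Gamma$, so it preserves edge-incidences at every vertex (and hence yields well-defined local actions there) and sends embedded cycles to embedded cycles of the same length.

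First I would handle the case $y = x$ as pure bookkeeping. The edge $\{v, vx\}$ is the unique edge of $\C_\Gamma$ at $v$ with label $x$, so by definition $\sigma(\alpha, v)(x)$ is the label of its image $\{\alpha(v), \alpha(vx)\}$. But because $v = (vx)\cdot x$ in $W_\Gamma$, that very same edge is also the unique $x$-labelled edge at $vx$, so it computes $\sigma(\alpha, vx)(x)$ too; hence the two values agree. Next, for $y \in \link(x)$ I would invoke the relation-cycle machinery developed before Definition \ref{definition_translation}. Since $m(x,y)$ is finite, based at $v$ there is an embedded cycle of length $2m(x,y)$ whose edges alternate between labels $x$ and $y$; this cycle passes through $vx$ and contains both the $y$-labelled edge at $v$ and the $y$-labelled edge at $vx$. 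Because $\alpha$ is a graph automorphism, it sends this cycle to an embedded cycle of the same length at $\alpha(v)$, which must then be the relation cycle at $\alpha(v)$ with alternating labels $\sigma(\alpha, v)(x)$ and $\sigma(\alpha, v)(y)$. The two images $\{\alpha(v), \alpha(vy)\}$ and $\{\alpha(vx), \alpha(vxy)\}$ both lie on this one cycle as $y$-labelled edges, hence share a single label, which is simultaneously $\sigma(\alpha, v)(y)$ and $\sigma(\alpha, vx)(y)$.

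The main subtlety, rather than a genuine obstacle, is just to check that $vx$ really sits on the relation cycle employed in the second case and that the two $y$-edges appear in the correct alternating positions on the image cycle under $\alpha$; once this is noted, the desired equality is a direct rereading of the definition of local action from both endpoints of the $x$-labelled edge $\{v, vx\}$.
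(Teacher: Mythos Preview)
Your proposal is correct and follows essentially the same approach as the paper's own proof: split into the cases $y=x$ and $y\in\link(x)$, dispatch the first by noting that the single edge $\{v,vx\}$ computes the local action at both endpoints, and for the second use that $\alpha$ sends the relation cycle of length $2m(x,y)$ at $v$ to one of the same length, forcing the $y$-labelled edges at $v$ and $vx$ to receive the same label under $\alpha$. The only difference is cosmetic (you treat $y=x$ first and spell out a bit more of the bookkeeping about which edges sit where on the cycle).
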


In view of Lemma~\ref{char_good_sep_set}, the presence of a good separating set in $\Gamma$ is equivalent to the condition appearing in the statement of Theorem \ref{thmA}, that is, to the existence of some $x\in V\Gamma$ and some non-trivial $\alpha\in \Aut(\Gamma)$ such that $\alpha\restriction_{\star(x)}=\id_\Gamma$.

Thus, in Proposition~\ref{prop_countable} we prove that if the graph $\Gamma$ is finite and does not admit any good separating set, then the automorphism group $\A_\Gamma$ only admits finite vertex stabilisers. Notice that, in general, the group $\A_\Gamma$ can be uncountable even if the graph $\Gamma$ is finite (Example \ref{example_part1} provides an example of an uncountable $\mathcal{A}_\Gamma$ with $\lvert V\Gamma \rvert =3$).

\begin{proposition}\label{prop_countable}
Let $\Gamma$ be a connected, simplicial graph, and suppose that there is no good separating set in $\Gamma$. Then all elements in $\A_\Gamma$ are almost translations, that is $\A_\Gamma=W_\Gamma\rtimes \Aut(\Gamma)$. In particular, if $\Gamma$ is finite then the automorphism group is finitely generated, and therefore countable.
\end{proposition}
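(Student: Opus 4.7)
The plan is to combine Lemma \ref{keylemma} with the characterisation of Lemma \ref{char_good_sep_set} to show that for any $\alpha\in\A_\Gamma$ the local action $\sigma(\alpha,-)\colon W_\Gamma\to \Aut(\Gamma)$ is a constant map, so that $\alpha$ is an almost translation by Definition \ref{definition_translation}. The conclusion then follows from Lemma \ref{almtranslations_fg}.

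First I would reformulate the hypothesis using Lemma \ref{char_good_sep_set}: the absence of a good separating set in $\Gamma$ is equivalent to the statement that for every $x\in V\Gamma$ and every $\sigma\in\Aut(\Gamma)$ with $\sigma\restriction_{\star(x)}=\id_\Gamma$ we must have $\sigma=\id_\Gamma$. Equivalently, any two elements of $\Aut(\Gamma)$ that agree on some star $\star(x)$ coincide on the whole of $\Gamma$. Now fix $\alpha\in\A_\Gamma$ and let $v$ be any vertex of $\C_\Gamma$ and $x\in V\Gamma$. Lemma \ref{keylemma} gives
\begin{equation*}
\sigma(\alpha,v)\restriction_{\star(x)}=\sigma(\alpha,vx)\restriction_{\star(x)},
\end{equation*}
so $\sigma(\alpha,v)$ and $\sigma(\alpha,vx)$ are two elements of $\Aut(\Gamma)$ agreeing on $\star(x)$; by the reformulated hypothesis, $\sigma(\alpha,v)=\sigma(\alpha,vx)$. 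Since this equality holds across every edge of the Cayley graph and $\C_\Gamma$ is connected, a straightforward induction on word length in $W_\Gamma$ shows that the map $\sigma(\alpha,-)$ is constant on $W_\Gamma$. Hence $\alpha$ is an almost translation.

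Having shown that every element of $\A_\Gamma$ is an almost translation, Lemma \ref{almtranslations_fg} yields $\A_\Gamma=W_\Gamma\rtimes\Aut(\Gamma)$. When $\Gamma$ is a finite simplicial graph, $W_\Gamma$ is a finitely generated Coxeter group and $\Aut(\Gamma)$ is a finite subgroup of the symmetric group on the finite set $V\Gamma$; consequently the semidirect product is finitely generated, and in particular countable. The main (essentially the only) conceptual step is the application of Lemma \ref{keylemma} in conjunction with Lemma \ref{char_good_sep_set}; once one realises that the no-good-separating-set condition is exactly what makes local automorphisms of $\Gamma$ rigid on stars, the proof becomes almost formal.
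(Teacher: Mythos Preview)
Your proof is correct and follows essentially the same approach as the paper: both apply Lemma~\ref{keylemma} to obtain $\sigma(\alpha,v)\restriction_{\star(x)}=\sigma(\alpha,vx)\restriction_{\star(x)}$, invoke Lemma~\ref{char_good_sep_set} to conclude $\sigma(\alpha,v)=\sigma(\alpha,vx)$, propagate this along the connected Cayley graph by induction, and finish with Lemma~\ref{almtranslations_fg}. The paper phrases the key step as $\bigl(\sigma(\alpha,v)^{-1}\circ\sigma(\alpha,vx)\bigr)\restriction_{\star(x)}=\id_\Gamma$ rather than as two automorphisms agreeing on $\star(x)$, but this is the same argument.
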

\begin{proof}
Let $\alpha\in \A_\Gamma$, $v$ be a vertex in $\C_\Gamma$ and $x\in V\Gamma$. By Lemma \ref{keylemma} we have that $\bigl(\sigma(\alpha,v)^{-1}\circ \sigma(\alpha,vx)\bigr)\restriction_{\star(x)}=\id_\Gamma$, and it must then be that $\sigma(\alpha,v)= \sigma(\alpha,vx)$, because by Lemma~\ref{char_good_sep_set} for all non-trivial $\sigma\in \Aut(\Gamma)$ and for all $x\in V\Gamma$ we have that $\sigma\restriction_{\star(x)}\neq\id_\Gamma$.

Therefore, for all $x\in V\Gamma$ we have that $\sigma(\alpha,v)=\sigma(\alpha,vx)$. Repeating this process and applying it inductively on the vertices of the spheres of increasing radius, we conclude that $\alpha$ is an almost translation.

Whenever $\Gamma$ is a finite graph, $\A_\Gamma$ is finitely generated, and therefore countable, in view of Corollary \ref{almtranslations_fg}.
\end{proof}

\section{Configurations of local actions}\label{section4}
In this section we will prove the converse implication of Theorem \ref{thmA}, that is that the existence of a good separating set in $\Gamma$ forces the automorphism group $\mathcal{A}_\Gamma$ to be uncountable.

The following two notions will be helpful to do so.

\begin{definition}
We say that $\overline{\sigma} \in \Aut(\Gamma)^{W_\Gamma}$ is a \emph{legal configuration of local actions} (a \emph{legal configuration} for short) if  there exists $\alpha \in \A_\Gamma$ such that $\overline{\sigma}(v) = \sigma(\alpha, v)$ for all $v \in W_\Gamma$. We will use the symbol $\Sigma_\Gamma$ to denote the set of all legal configurations in $ \Aut(\Gamma)^{W_\Gamma}$.

We say that $\overline{\sigma} \in \Aut(\Gamma)^{W_\Gamma}$ \emph{satisfies the $(*)$-condition} if for all $v \in W_\Gamma$ and all $x \in V\Gamma$ we have
    \begin{displaymath}
        \overline\sigma(v)\restriction_{\star(x)} = \overline\sigma(vx)\restriction_{\star(x)}.
    \end{displaymath}
\end{definition}

In particular, given an automorphism $\alpha\in \mathcal{A}_\Gamma$, by definition, the element 
$\overline\sigma_\alpha:=\{\sigma(\alpha,u)\mid u\in W_\Gamma\}\in \Aut(\Gamma)^{W_\Gamma}$ is a legal configuration of local actions.
On the other hand, Lemma \ref{keylemma} assures that it satisfies the $(*)$-condition.

We will see in the following result that these two notions, that is the one of legal configurations and the $(*)$-condition, are equivalent.

\begin{lemma}
    \label{lemma:legal configurations}
    Let $\overline{\sigma} \in \Aut(\Gamma)^{W_\Gamma}$ be arbitrary. Then $\overline{\sigma} \in \Sigma_\Gamma$ if and only if $\overline{\sigma}$ satisfies the $(*)$-condition.
\end{lemma}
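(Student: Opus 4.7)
The plan is to dispose of the forward implication immediately and then spend the work on the construction for the reverse implication. If $\overline\sigma\in\Sigma_\Gamma$ then by definition there is $\alpha\in\A_\Gamma$ with $\overline\sigma(v)=\sigma(\alpha,v)$ for every $v\in W_\Gamma$, and Lemma~\ref{keylemma} applied to $\alpha$ at every vertex and every generator gives exactly the $(*)$-condition.

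For the converse, given $\overline\sigma$ satisfying $(*)$, I build an $\alpha\in\A_\Gamma$ realising it. I set $\alpha(e):=e$ and, for a word $W=(x_1,\ldots,x_n)\in V\Gamma^*$ with partial products $v_i:=\omega(x_1,\ldots,x_i)$, define inductively
\begin{displaymath}
\alpha(v_i):=\alpha(v_{i-1})\cdot \overline\sigma(v_{i-1})(x_i).
\end{displaymath}
To conclude that this yields a well-defined function of the group element $v=\omega(W)$, it suffices to check invariance under the elementary simplifications $s_1$ and $s_2$; by Theorem~\ref{Tits_theorem} together with the standard reduction of an arbitrary word to a minimal one by $s_1$-moves, any two words representing the same element are linked by a finite sequence of such moves.

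The heart of the argument is this invariance check, and I expect the main obstacle to lie there. For an $s_1$-cancellation deleting two consecutive syllables equal to $x\in V\Gamma$ at position $i$: since $x\in\star(x)$, the $(*)$-condition at $v_{i-1}$ with letter $x$ gives $\overline\sigma(v_{i-1})(x)=\overline\sigma(v_i)(x)=:a\in V\Gamma$, so the two syllables produced by the recursion both equal $a$ and cancel because $a$ is an involution. For an $s_2$-braid move exchanging alternating sub-words $(x,y,x,\ldots)$ and $(y,x,y,\ldots)$ of common length $m:=m(x,y)<+\infty$: iterating $(*)$ along the alternating sub-path, and using $x,y\in\star(x)\cap\star(y)$, shows that $\overline\sigma(v_{i-1})=\overline\sigma(v_i)=\cdots=\overline\sigma(v_{i+m-1})$ on the set $\{x,y\}$. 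Writing $a:=\overline\sigma(v_{i-1})(x)$ and $b:=\overline\sigma(v_{i-1})(y)$, the two sub-words are therefore mapped by $\alpha$ to the alternating $V\Gamma$-words $(a,b,a,\ldots)$ and $(b,a,b,\ldots)$ of length $m$; since $\overline\sigma(v_{i-1})\in\Aut(\Gamma)$ preserves weights, $m(a,b)=m$, so the braid relation $(ab)^m=e$ holds in $W_\Gamma$ and both words evaluate to the same element.

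It remains to check that $\alpha\in\A_\Gamma$ and that it realises $\overline\sigma$. By construction, the edge $\{v,vx\}$ labelled $x$ is sent to the edge $\{\alpha(v),\alpha(v)\cdot\overline\sigma(v)(x)\}$ labelled $\overline\sigma(v)(x)$, so $\alpha$ preserves adjacency and $\sigma(\alpha,v)=\overline\sigma(v)$. For bijectivity I run the analogous construction defining $\beta$ by $\beta(e):=e$ and $\beta(wy):=\beta(w)\cdot\overline\sigma(\beta(w))^{-1}(y)$; a direct check, using that $\overline\sigma(v)$ carries $\star(x)$ bijectively onto $\star(\overline\sigma(v)(x))$, shows that the configuration $\overline\tau(w):=\overline\sigma(\beta(w))^{-1}$ satisfies $(*)$, so $\beta$ is well defined by the same argument as above. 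A straightforward induction on word length then yields $\beta\circ\alpha=\id_{W_\Gamma}=\alpha\circ\beta$, hence $\alpha\in\A_\Gamma$ and $\overline\sigma\in\Sigma_\Gamma$, completing the equivalence.
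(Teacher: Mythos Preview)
Your proof is correct and takes a genuinely different route from the paper's. The paper builds $\alpha$ geometrically, layer by layer on the balls $B(e,n)$ in $\C_\Gamma$, checking well-definedness only for geodesic representatives and establishing bijectivity through a somewhat involved argument using a parity count and the partition $V\Gamma = V_+ \cup V_-$. You instead work algebraically on \emph{all} words in $V\Gamma^*$, reducing well-definedness to invariance under the elementary simplifications $s_1$ and $s_2$, and you obtain bijectivity by exhibiting an explicit inverse $\beta$. Your approach is more uniform, and the inverse construction is cleaner than the paper's bijectivity check; the price is that you must appeal to the (standard) fact that the moves $s_1$ and $s_2$ generate all relations in $W_\Gamma$, whereas the paper can get by with the geodesic version stated in Theorem~\ref{Tits_theorem}.

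Two small points of phrasing. First, the reduction of an arbitrary word to a minimal one generally requires both $s_1$ \emph{and} $s_2$ moves, not just $s_1$: one may need a braid move to bring two equal letters adjacent before cancelling. This does not affect your argument, since you verify invariance under both. Second, defining $\overline\tau(w) := \overline\sigma(\beta(w))^{-1}$ as a function on $W_\Gamma$ before $\beta$ is known to descend to $W_\Gamma$ is slightly circular; the clean fix is to run the $s_1$/$s_2$ invariance check for $\beta$ directly on words, exactly as you did for $\alpha$, using the $(*)$-condition for $\overline\sigma$ at the already-computed element $\beta(W)\in W_\Gamma$. That check goes through without difficulty, so this is a matter of presentation rather than a gap.
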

\begin{proof}
If $\overline{\sigma} \in \Sigma_\Gamma$ then, by definition, there exists $\alpha\in \mathcal{A}_\Gamma$ such that $\overline\sigma(v)=\sigma_\Gamma(\alpha,v)$ for all $v\in W_\Gamma$. Lemma \ref{keylemma} assures us that $\overline\sigma$ satisfies the $(*)$-condition.    
 
    Now, suppose that $\overline\sigma \in \Aut(\Gamma)^{W_\Gamma}$ satisfies the $(*)$-condition. We will construct $\alpha \in \A_\Gamma$ such that $\sigma_\Gamma(\alpha, v) = \overline{\sigma}(v)$ for all $v\in W_\Gamma$. First, we inductively construct a sequence of maps $\alpha_n \colon B(e, n) \to B(e, n)$, where $B(e, n)$ is the ball of radius $n$ centred at the identity vertex $e$ in the graph~$\C_\Gamma$.
    
    For $n = 0$ we simply have $\alpha_0(e) := e$. For $n \geqslant 1$ we inductively define 
    $\alpha_{n}(v) := \alpha_{n-1}(v)$ if $v \in B(e, n-1)$. On the other hand, if $v \in \partial B(e, n)$, as the Cayley graph is connected, there exists (at least) a generator $x\in V\Gamma$ such that $vx\in B(e,n-1)$; in this case we define
    \begin{displaymath}\alpha_{n}(v):=\underbrace{\alpha_{n-1}(vx)}_{\in V\mathcal{C}_\Gamma}\underbrace{\overline{\sigma}(vx)(x)}_{\in V\Gamma}\in V\mathcal{C}_\Gamma\end{displaymath}
%
    By definition, the vertex $\alpha_{n}(v)$ is the unique vertex in the Cayley graph $\mathcal{C}_\Gamma$ connected to the vertex $\alpha_{n-1}(vx)$ by an edge labelled by $\overline{\sigma}(vx)(x)$.
    
    We will use induction on $n$ to show that $\alpha_n\colon B(e,n)\to B(e,n)$ is a graph automorphism for all natural numbers $n\in \mathbb{N}$, that is, it is a bijection of the set $B(e,n)$ that preserves edges.

    Clearly, for $n=0$ the claim holds. Now, let us assume that $\alpha_i$ is a well-defined graph automorphism for all $i \leq n$, and consider $\alpha_{n+1} \colon B( e , n+1) \to B( e , n+1)$.
    
    First, we show that $\alpha_{n+1}$ is well-defined on all vertices of $B( e , n+1 )$. By the induction hypothesis $\alpha_{n+1}$ is well-defined on $B(e , n)$; thus, let $w \in {\partial B( e , n+1 )}$ and suppose that there are two distinct generators $x,y \in V\Gamma$ and $u,v \in  B( e , n )$ such that $ux = w = vy$. First, we show that
    \begin{displaymath}
        \alpha_n(v)\overline{\sigma}(v)(x) = \alpha_n(u)\overline{\sigma}(u)(y).
    \end{displaymath}
    To this end, let $W_u=(x_1, \dots, x_n)$ and $W_v = (y_1, \dots, y_n)$ be two reduced words representing $u$ and $v$, respectively (thus, ${x_1, y_1,} \dots, {x_n, y_n \in V\Gamma}$). It immediately follows that both $W = (x_1, \dots, x_n, x)$ and $W' = (y_1, \dots, y_n, y)$ are reduced words representing $w$. By Theorem \ref{Tits_theorem}, we see that $W'$ can be obtained from $W$ by applying a finite number of braid relations. In particular, we see that $\{x,y\} \in E\Gamma$: let $m = m(x,y)$ be the corresponding weight. 
    Thus, we have a relation cycle of $x$ and $y$ at the vertex $w\in \mathcal{C}_\Gamma$, given by the following vertices:

    \begin{displaymath}
    \begin{split}
        u_0 = u,\qquad u_1 = u y,&\qquad u_2 = u yx,\qquad\dots\\
        u_{2m-3} = u (yx)^{m-2}y,\qquad  &u_{2m-2} = u(yx)^{m-1}.
    \end{split}
    \end{displaymath}
    We have that 
    \begin{displaymath}
        u_{2m-2} = u (yx)^{m-1}= u (yx)^{m}xy = uxy = wy = v
    \end{displaymath}
    and that $u_0, u_1, \dots, u_{2m-2} \in B(e,n)$.
    Clearly, the vertices $u_0, u_1, \dots, u_{2m-2}$ form a path in $B( e , n)$ labelled by $y$ and $x$ (it is the relation cycle of $x$ and $y$ at $w$). In particular, we see that
    \begin{align*}
        \overline{\sigma}(u_0)(x) = \dots = \overline{\sigma}(u_{2m_1})(x) = x'\\
        \overline{\sigma}(u_0)(y) = \dots = \overline{\sigma}(u_{2m_1})(y) = y'
    \end{align*}
    because $\overline{\sigma}$ satisfies the $(*)$-condition.
    Clearly, $\{x',y'\} \in E\Gamma$ as well, with $m(x',y') = m$. Consequently, we see that the path 
    \begin{displaymath}
        P = (u_0, u_1, \dots, u_{2m-2}) \subseteq B( e,n+1)
    \end{displaymath}
    with alternating labels $y,x$ maps onto a path 
    \begin{displaymath}
        P' = \bigl(\alpha_n(u_0), \alpha_n(u_1), \dots, \alpha_n(u_{2m-2})\bigr) \subseteq B( e,n+1)
    \end{displaymath}
    with alternating labels $y',x'$ and that the path $P'$ uniquely completes to a circuit with alternating labels $x',y'$. In particular
    \begin{displaymath}
        \alpha_{n+1}(ux) = \alpha_n(u)\overline{\sigma}(u)(x) = \alpha_n(v)\overline{\sigma}(y) = \alpha_{n+1}(vy).
    \end{displaymath}
    We see that $\alpha_{n+1} \colon B( \id,n+1) \to B( \id,n+1)$ is well-defined.

    To see that $\alpha_{n+1}$ preserves edges, we need to show that for all $u,v \in B(e,n+1)$ we have that $\{\alpha_{n+1}(u),\alpha_{n+1}(v)\} \in E\C_\Gamma$ whenever $\{u,v\}\in E\C_\Gamma$. From the definition of $\alpha_{n+1}$ we clearly see that if $v = ux$ and $\|u\| < \|v\|$, then the vertices $\alpha_{n+1}(u)$ and $\alpha_{n+1}(v)$ are connected by an edge labelled by $\overline{\sigma}(u)(x)$. 
    
    Now assume that there are $u,v \in \partial B(e,n+1)$ such that $\|u\| = \|v\|$ and that there is $x \in V\Gamma$ such that $u = vx$. Let $W_u=(x_1, \dots, x_n)$ and $W_v = (y_1, \dots, y_n)$, where $x_1, y_1, \dots, x_n, y_n \in V\Gamma$, be some reduced expressions for $u$ and $v$, respectively. We see that the word $W = (y_1, \dots, y_n, x, x_n^{-1}, \dots, x_1^{-1})$ is an expression for $vxu^{-1} = 1$. Clearly, $\lvert W\rvert  = 2n + 1$. However, the elementary simplification on $(s_1)$ shortens the word by exactly $2$ letters, meaning that the word $W$ can never be reduced to the empty string, which is a contradiction. We see that no such $u,v$ and $x$ can occur and, therefore, the map $\alpha_{n+1}$ preserves edges.
    
    Finally, we show that $\alpha_{n+1}$ is a bijection. Let us note that it is clear from the construction that $\alpha_{n+1}$ is injective on $B(w,1)$ for every $w \in B( e,n)$. Now suppose that there are $u,v \in B(e,n+1)$ such that $\alpha_{n+1}(u) = \alpha_{n+1}(v)$. Since $\alpha_{n+1}$ is injective on $B(e,n)$, we see that $\{u,v\} \cap \partial B(e, n+1) \neq \emptyset$.    Without loss of generality we may assume that $u \in B(e,n+1)$; let $u' \in B(e,n)$ and $x \in V\Gamma$ be such that $u = u'x$. Clearly $\alpha_{n+1}(u) \in B\bigl(\alpha_{n}(u'),1\bigr)$, so $n-1 \leq \|\alpha_{n+1}(u)\| \leq n+1$. 
    
    Suppose that $\|\alpha_{n+1}(u)\| = n$, and let $(x_1', \dots, x_n')$ and $(y_1', \dots, y_n')$, where $x_1', y_1', \dots, x_n', y_n' \in V\Gamma$ be some reduced expressions for $\alpha_{n+1}(u)$ and $\alpha_{n+1}(u')$. It then follows that the expression
    \begin{displaymath}
        W = (x_1', \dots, x_n', \overline{\sigma}(u')(x),y_n', \dots, y_1')
    \end{displaymath}
    represents the trivial element and therefore can be reduced to the empty string, which is a contradiction because the simplification $(s_1)$ always reduces by 2 syllables and $|W| = 2n+1$. Thus $\|\alpha_{n+1}(u)\| \neq n$. 
    
    To see that $\|\alpha_{n+1}(u)\| = n+1$ we consider the partition $V\Gamma = V_+ \cup V_-$, where $z \in V_+$ if $\|u'z\| > \|u'\|$ and $z \in V_-$ if $\|u'z\| < \|u'\|$. As $\alpha_{n+1}$ must stabilise set-wise the ball $B(e,j)$ for all $j \leq n$, we see that $\overline{\sigma}(u)(V_-)=V_-$. We immediately deduce that $\overline{\sigma}(u)(V_+)=V_+$ as well, as otherwise $\alpha_{n+1}$ would not be injective. As $\|u'x\| = n+1 > n = \|u\|$ we see that $x \in V_+$ and, consequently $\|\alpha_{n+1}(u)\| = n+1$. This then implies that $\|v\| = n+1$ as well, so both $u,v \in \partial B(e, n+1)$. Let us pick $v' \in B( e, n)$ and $y \in V\Gamma$ be such that $v = v'y$. By definition,
    \begin{displaymath}
        \alpha_{n+1}(u) = \alpha_{n+1}(u')\overline{\sigma}(u')(x) = \alpha_{n+1}(v')\overline{\sigma}(v')(y) = \alpha_{n+1}(v).
    \end{displaymath}
   The rest of the proof is analogous to the first part, showing that
   \begin{displaymath}
       \alpha_{n+1}^{-1} \colon B(e,n+1) \to B(e,n+1)
   \end{displaymath}
   is a well-defined function.
   
   We have constructed a sequence of partial maps
   $\alpha_{n} \colon B(e, n) \to B(e, n)$, for all $n \in \mathbb{N}$, such that $\alpha_{n}$ is a graph automorphism and $\alpha_n \restriction_{B(e, m)} = \alpha_{m}$ whenever $m  < n$. We now define a new map $\alpha \colon \C_\Gamma \to \C_\Gamma$ as
   \begin{displaymath}
       \alpha(v) = \alpha_n(v) \quad\mbox{ whenever }\quad v \in B( e,n).
   \end{displaymath}
   It immediately follows from what we proved so far that $\alpha \in \A_\Gamma$ and $\sigma(\alpha, v) = \overline{\sigma}(v)$ for all $v \in W_\Gamma$. Thus, $\overline{\sigma}$ is a legal configuration of local actions, and the proof is complete.
\end{proof}

Clearly, every $\alpha \in \A_\Gamma$ is fully determined by a pair $(w, \overline{\sigma}) \in W_\Gamma \times \Sigma_\Gamma$, where $w = \alpha(e)$ and $\overline{\sigma}(v) = \sigma(\alpha, v)$ for all $v \in W_\Gamma$. In particular, the group $\A_\Gamma$ is in bijection with the set $W_\Gamma \times \Sigma_\Gamma$ and, consequently, the stabiliser $\stab_{\A_\Gamma}(e)$ is in bijection with the set $\Sigma_\Gamma$.

\begin{lemma}
    Suppose that the the graph $\Gamma$ contains a good separating set $S \subseteq\Gamma$, and let $\Gamma_1$ be the corresponding subgraph such that there is a non-trivial $\nu \in \Aut(\Gamma_1)$ with $\nu\restriction_S = \id_S$. Then $\lvert \Sigma_\Gamma\rvert \geq 2^{\lvert W_\Gamma/W_{\Gamma_1}\rvert}$.
\end{lemma}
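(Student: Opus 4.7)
The strategy is to use Lemma \ref{lemma:legal configurations} and construct a distinct legal configuration for each subset of the coset space $W_\Gamma/W_{\Gamma_1}$. Concretely, I would first extend the automorphism $\nu \in \Aut(\Gamma_1)$ to an element $\widetilde{\nu} \in \Aut(\Gamma)$ by declaring $\widetilde{\nu}|_{V\Gamma_2 \setminus S} = \id$ and $\widetilde{\nu}|_{V\Gamma_1} = \nu$. This is well defined as a bijection since $\nu$ fixes $S$ pointwise; it preserves weights because (i) on $V\Gamma_1$ it is $\nu$, (ii) on $V\Gamma_2 \setminus S$ it is the identity, and (iii) by the defining property of the separating set there are no edges between $V\Gamma_1 \setminus S$ and $V\Gamma_2 \setminus S$, so the cross weights are $+\infty$ on both sides.

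Next, for every subset $T \subseteq W_\Gamma / W_{\Gamma_1}$ I would define a map $\overline{\sigma}_T \colon W_\Gamma \to \Aut(\Gamma)$ by
\begin{equation*}
    \overline{\sigma}_T(v) := \begin{cases} \widetilde{\nu} & \text{if } v W_{\Gamma_1} \in T, \\ \id_\Gamma & \text{otherwise.} \end{cases}
\end{equation*}
The key step is to verify the $(*)$-condition for each such $\overline{\sigma}_T$, and I would split into two cases according to the generator $x \in V\Gamma$. If $x \in V\Gamma_1$ then $x \in W_{\Gamma_1}$, so $v$ and $vx$ lie in the same left coset of $W_{\Gamma_1}$, forcing $\overline{\sigma}_T(v) = \overline{\sigma}_T(vx)$ and the condition becomes trivial. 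If instead $x \in V\Gamma_2 \setminus S$, then since $S$ separates $V\Gamma_1 \setminus S$ from $V\Gamma_2 \setminus S$, the star $\star(x)$ is entirely contained in $V\Gamma_2$, on which $\widetilde{\nu}$ acts as the identity; hence both $\overline{\sigma}_T(v)|_{\star(x)}$ and $\overline{\sigma}_T(vx)|_{\star(x)}$ equal $\id_{\star(x)}$, so the $(*)$-condition holds. By Lemma \ref{lemma:legal configurations}, each $\overline{\sigma}_T$ lies in $\Sigma_\Gamma$.

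Finally, I would check that the assignment $T \mapsto \overline{\sigma}_T$ is injective: if $T \neq T'$, pick a coset $g W_{\Gamma_1}$ lying in exactly one of them and any representative $v \in g W_{\Gamma_1}$; then $\{\overline{\sigma}_T(v), \overline{\sigma}_{T'}(v)\} = \{\widetilde{\nu}, \id_\Gamma\}$, and these differ because $\nu$, and hence $\widetilde{\nu}$, is non-trivial. This yields $|\Sigma_\Gamma| \geq 2^{|W_\Gamma / W_{\Gamma_1}|}$, as required. I do not expect any serious obstacle here, beyond the bookkeeping for the case split in the $(*)$-condition; the decisive geometric input is simply that a good separating set prevents $\widetilde{\nu}$ from \emph{seeing} the $\Gamma_2$-side, so one may toggle its use coset-by-coset.
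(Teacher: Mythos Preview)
Your proposal is correct and follows essentially the same approach as the paper: extend $\nu$ by the identity to an automorphism of $\Gamma$, define the configuration coset-by-coset according to a chosen subset of $W_\Gamma/W_{\Gamma_1}$, and verify the $(*)$-condition via the same case split (the paper phrases it as ``same coset vs.\ different coset'', which is equivalent to your ``$x\in V\Gamma_1$ vs.\ $x\in V\Gamma_2\setminus S$''). If anything, you are slightly more explicit than the paper in checking that $\widetilde{\nu}$ is a weighted-graph automorphism and that $T\mapsto\overline{\sigma}_T$ is injective.
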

\begin{proof}
First, let $\nu' \in \Aut(\Gamma)$ denote the extension of $\nu$ to the whole of $\Gamma$, i.e.
\begin{displaymath}
    \nu'(x) = \begin{cases}
                            \nu(x) & \quad\mbox{if }x \in \Gamma_1;\\
                            x      & \quad\mbox{otherwise}.
                        \end{cases}
\end{displaymath}
Let $\chi \subseteq W_\Gamma/W_{\Gamma_1}$ be arbitrary. We define an element $\overline{\nu}_\chi \in \Aut(\Gamma)^{W_\Gamma}$ in a following way:
\begin{displaymath}
    \overline{\nu}_\chi(w) =    \begin{cases}
                                \nu'             &\quad\mbox{if $w W_{\Gamma_1} \in \chi$};\\
                                \id_{\Gamma}    &\quad\mbox{otherwise}.
                            \end{cases}
\end{displaymath}
We claim that $\overline{\nu}_\chi \in \Sigma_\Gamma$, that is  $\overline{\nu}_\chi$ is a legal configuration of local actions. Let $w \in W_\Gamma$ and $x \in V\Gamma$ be arbitrary. If $w W_{\Gamma_1} = wxW_{\Gamma_1}$ then, by definition,  
$\overline\nu_\chi(w) = \overline\nu_\chi(wx)$. Now, let us suppose that $w W_{\Gamma_1} \neq wxW_{\Gamma_1}$. This means that $x \in V\Gamma \setminus \Gamma_1$ and, in particular, $\star(x) \subseteq (V\Gamma \setminus \Gamma_1)\cup S$. We immediately see that $\overline{\nu}_\chi(w)\restriction_{\star(x)} = \id_\Gamma$ and, similarly, that $\overline{\nu}_\chi(wx)\restriction_{\star(x)} = \id_\Gamma$. Following Lemma \ref{lemma:legal configurations}, $\overline{\nu}_\chi$ is a legal configuration. This means that $\Sigma_\Gamma$ contains a subset of cardinality $2^{|W_\Gamma/W_{\Gamma_1}|}$, therefore
\begin{displaymath}
    2^{|W_{\Gamma}/W_{\Gamma_1}|} \leq |\Sigma_\Gamma|
\end{displaymath}
and the lemma is proved.
\end{proof}

As we discussed earlier, the stabiliser $\stab_{\A_\Gamma}(e)$ is in bijection with the set $\Sigma_\Gamma$ in a natural way. Also, one can verify that if $S \subset V\Gamma$ is a good separating set and $\Gamma_1$ is the corresponding factor, then the subgroup  
$W_{\Gamma_1}$ has infinite index in $W_\Gamma$.
Combining these with the simple observation that $\stab_{\A_\Gamma}(w) = w \stab_{\A_\Gamma}(e) w^{-1}$ we get:
\begin{corollary}
    \label{cor:uncountable stabilisers}
    Let $\Gamma$ be a finite weighted graph. If $\Gamma$ admits a good separating set, then the group $\A_\Gamma$ admits uncountable vertex stabilisers.
\end{corollary}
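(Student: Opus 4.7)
The plan is to assemble the ingredients already laid out in the paper. The preceding lemma gives $|\Sigma_\Gamma| \geq 2^{|W_\Gamma/W_{\Gamma_1}|}$, and the discussion immediately after Lemma~\ref{lemma:legal configurations} observes that $\stab_{\A_\Gamma}(e)$ is in natural bijection with $\Sigma_\Gamma$ via $\alpha \mapsto \overline\sigma_\alpha$. So once I know that $W_{\Gamma_1}$ has infinite index in $W_\Gamma$, I obtain
\begin{equation*}
|\stab_{\A_\Gamma}(e)| = |\Sigma_\Gamma| \geq 2^{|W_\Gamma/W_{\Gamma_1}|} \geq 2^{\aleph_0},
\end{equation*}
which is uncountable. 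Finally, all vertex stabilisers are conjugate in $\A_\Gamma$ via the translations $L_w \in W_\Gamma$, so every $\stab_{\A_\Gamma}(w)$ has the same (uncountable) cardinality.

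The only non-formal step is therefore verifying that $[W_\Gamma : W_{\Gamma_1}] = \infty$. Here is how I would argue: by the definition of good separating set, both $\Gamma_1 \setminus S$ and $\Gamma_2 \setminus S$ are non-empty, so pick $x \in V\Gamma_1 \setminus S$ and $y \in V\Gamma_2 \setminus S$. Since $x$ and $y$ lie in different connected components of $\Gamma \setminus S$, they are not adjacent in $\Gamma$, hence $m(x,y) = +\infty$ and the subgroup $\langle x,y\rangle \leq W_\Gamma$ is the infinite dihedral group. I would then claim that the cosets $(xy)^n W_{\Gamma_1}$, for $n \geq 0$, are pairwise distinct. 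Equivalently, $(xy)^n \notin W_{\Gamma_1}$ for every $n \geq 1$. This is a standard consequence of the fact that $W_\Gamma$ decomposes as the amalgamated product $W_{\Gamma_1} *_{W_S} W_{\Gamma_2}$ (with $W_{\Gamma_1} \cap W_{\Gamma_2} = W_S$); the word $(xy)^n$ is a reduced alternating product in this amalgam with $y \in W_{\Gamma_2} \setminus W_S$, so it cannot represent an element of a single factor. Alternatively, one can invoke Tits' solution of the word problem (Theorem~\ref{Tits_theorem}): any reduced expression for an element of the standard parabolic subgroup $W_{\Gamma_1}$ uses only letters from $V\Gamma_1$, but the reduced word $(xy)^n$ contains the letter $y \notin V\Gamma_1$ and cannot be transformed to avoid $y$ by braid moves, because any braid relation involving $y$ would require a neighbour of $y$ in $\Gamma$ to appear adjacently, and no such neighbour of $y$ lies in the expression.

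The main obstacle is this infinite-index claim; everything else is bookkeeping. Depending on how self-contained one wants the argument to be, one either invokes the amalgamated product structure as a black box or writes out the Tits argument. I would favour the amalgam route since it is a classical fact about Coxeter systems associated to a visual splitting induced by a separating set of the defining graph, and it only requires the non-emptiness of $\Gamma_1 \setminus S$ and $\Gamma_2 \setminus S$ granted by the definition of a good separating set.
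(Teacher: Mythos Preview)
Your proposal is correct and follows essentially the same route as the paper: the paper simply strings together the bijection $\stab_{\A_\Gamma}(e)\leftrightarrow\Sigma_\Gamma$, the preceding lemma's bound $|\Sigma_\Gamma|\geq 2^{|W_\Gamma/W_{\Gamma_1}|}$, the observation $\stab_{\A_\Gamma}(w)=w\,\stab_{\A_\Gamma}(e)\,w^{-1}$, and the assertion (left to the reader) that $W_{\Gamma_1}$ has infinite index. You in fact supply more than the paper does by actually justifying the infinite-index claim; the amalgam argument is the cleaner of your two options, and your Tits-style alternative can be sharpened by noting that braid moves preserve the \emph{set} of letters, so the support of $(xy)^n$ is $\{x,y\}\not\subseteq V\Gamma_1$.
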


\begin{remark}
For an example when the separating set is not empty, we can consider the graph
\begin{equation*}
\begin{tikzpicture}
\draw[thick] 
(1,0) --(0,0) --(.5,.75) --(1,0) --(2,0);
\draw[fill] 
(0,0) circle [radius=0.035]
node[xshift=-.2cm] {\small $v_1$}
(.5,.75) circle [radius=0.035]
node[xshift=-.2cm,yshift=.2cm] {\small $v_2$}
(1,0) circle [radius=0.035]
(2,0) circle [radius=0.035];
\end{tikzpicture}
\end{equation*}
and the automorphism $\alpha$ that interchanges the vertices $v_1$ and $v_2$ and fixes the other two vertices. Corollary~\ref{cor:uncountable stabilisers} guarantees that the automorphism group of the Cayley graph $\C_\Gamma$ is uncountable. If $v\in \C_\Gamma$ is a vertex, the automorphism $\alpha_v$ is given by reflection around the diagonal square
\begin{equation*}
\begin{tikzpicture}
\filldraw[draw=none, color=orange!15]
(-.2,-.2) --(-.2,.8) -- (.5,.5) --(.5,-.5) --(-.2,-.2);

\draw[thick] 
(.5,.5) --(.5,-.5) --(-.5,-.5) --(-.5,.5) --(.5,.5)

(-.2,-.2) -- (-.2,.8) -- (.8,.8) -- (.8,-.2) --(-.2,-.2)

(.5,.5) --(.8,.8)
(.5,-.5) --(.8,-.2)
(-.5,-.5) --(-.2,-.2)
(-.5,.5) -- (-.2,.8)

(-.5,-.5)--(-1.2,-.8) --(-1.2,.2) --(-.5,.5)

(.5,.5)--(1.2,.2) --(1.2,-.8) --(.5,-.5)

(.8,.8) --(1.5,1.1) -- (1.5,.1) --(.8,-.2)

(-.2,-.2) --(-.9,.1) --(-.9,1.1) -- (-.2,.8);

\draw[fill] (.5,.5) circle [radius=0.035] 
(-.5,.5) circle [radius=0.035] 
(-.5,-.5) circle [radius=0.035] 
(.5,-.5) circle [radius=0.035]
(-.2,-.2) circle [radius=0.035]
(-.2,.8) circle [radius=0.035]
node[xshift=.1cm,yshift=.2cm] {\small $v$}
(.8,.8) circle [radius=0.035]
(.8,-.2) circle [radius=0.035]

(-1.2,-.8) circle [radius=0.035]
(-1.2,.2) circle [radius=0.035]

(1.2,.2) circle [radius=0.035]
(1.2,-.8) circle [radius=0.035]

(1.5,1.1) circle [radius=0.035]
(1.5,.1) circle [radius=0.035]

(-.9,.1) circle [radius=0.035]
(-.9,1.1) circle [radius=0.035];
\end{tikzpicture}
\end{equation*}
Also, notice that Corollary \ref{cor:uncountable stabilisers} provides examples of one-ended right-angled Coxeter groups whose Cayley graph has uncountable automorphism group. For instance, one can consider the following graph.
\begin{equation*}
\begin{tikzpicture}
\draw[thick]
(0,0.5) -- (0,-0.5)
(0,-0.5) -- (1,-0.5)
(1,0.5) -- (0,0.5)
(1,0.5) -- (2, 0.5)
(1,-0.5) -- (2,-0.5)
(2, 0.5) -- (2, -0.5)
(1, 0.5) -- (2, -0.5)
(1, -0.5) -- (2, 0.5);
\draw[fill] 
(0, 0.5) circle [radius=0.035]
(0,-0.5) circle [radius=0.035]
(1,-0.5) circle [radius=0.035] 
(1, 0.5) circle [radius=0.035]
(2,-0.5) circle [radius=0.035] 
(2, 0.5) circle [radius=0.035];
\end{tikzpicture}
\end{equation*}
\end{remark}

From Proposition \ref{prop_countable} we immediately deduce that Cayley graphs of right-angled Coxeter groups associated to  \emph{path graphs} $P_n$ and to \emph{cycle graphs} $C_n$ are countable.

A path graph, denoted $P_n$, is a connected graph with $n$ vertices, of which exactly two have valency one, whereas the remaining $n-2$ have valency two, and all edged have weight two.
A cycle graph, denoted $C_n$, is a connected graph with $n$ vertices of valency two and edges with weight two.

It is not true that if $\Delta$ is an induced subgraph of $\Gamma$ then $\Aut\bigl(\Cay(W_\Delta)\bigr)$ is a subgroup of $\A_\Gamma$. Consider the full graph $\Delta$ on two vertices, and
let $\Gamma=P_3$,  
so that $\Delta$ is an induced subgraph of $\Gamma$. It is easy to recognize that the group $\Aut \bigl(\Cay(W_\Delta)\bigr)$ is not a subgroup of $\Aut \bigl(\C_\Gamma\bigr)$. Indeed $W_\Delta\cong 
\mathbb{Z}_2\times \mathbb{Z}_2$ and $\Aut\bigl(\Cay(W_\Delta)\bigr)$ is the dihedral group $D_8$, the reflection group of a square. In particular, there is an element of order four in $\Aut\bigl(\Cay(W_\Delta)\bigr)$.
On the other hand, there is no element of order four in $\A_\Gamma$.

More is true. Indeed, from what we proved we immediately observe that there are graphs $\Gamma$ with induced subgraphs $\Delta$ such that $\A_\Gamma$ is countable, but $\Aut\bigl(\Cay(W_\Delta)\bigr)$ is uncountable. As an example, we can take $\Gamma=P_4$, and $\Delta$ to be the graph obtained from $\Gamma$ by removing a vertex of valency two.
In this case, the Cayley graph of $W_\Gamma$ is
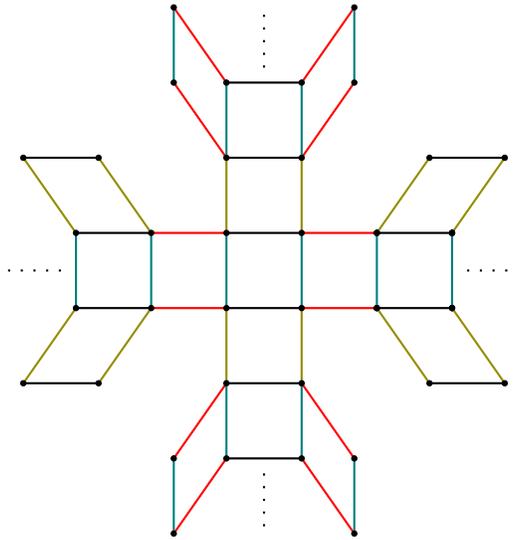
\begin{figure}[h]
\begin{tikzpicture}
\draw[thick, loosely dotted] 
(-2.2,0) --(-3,0)
(3.2,0) --(4,0)
(.5,2.7) --(.5,3.5)
(.5,-2.7) --(.5,-3.5)
;

\draw[thick,olive]
(0,.5) --(0,1.5)
(1,.5) --(1,1.5)
(0,-.5) --(0,-1.5)
(1,-.5) --(1,-1.5)
(-1,.5) --(-1.7,1.5)
(-2,.5) --(-2.7,1.5)
(-1,-.5) --(-1.7,-1.5)
(-2,-.5) --(-2.7,-1.5)
(2,.5) --(2.7,1.5)
(3,.5) --(3.7,1.5)
(2,-.5) --(2.7,-1.5)
(3,-.5) --(3.7,-1.5)
;

\draw[thick,teal]
\foreach \i in {-2,...,3} {
(\i,.5)--(\i,-.5)
}
(0,1.5) --(0,2.5)
(1,1.5) --(1,2.5)
(0,-1.5) --(0,-2.5)
(1,-1.5) --(1,-2.5)
(-.7,2.5)--(-.7,3.5)
(-.7,-2.5)--(-.7,-3.5)
(1.7,2.5)--(1.7,3.5)
(1.7,-2.5)--(1.7,-3.5)
;

\draw[thick]
\foreach \i in {-2,0,2} 
{
(\i,.5)--(\i+1,.5)
(\i,-.5)--(\i+1,-.5)
}
(0,1.5) --(1,1.5)
(0,2.5) --(1,2.5)
(0,-1.5) --(1,-1.5)
(0,-2.5) --(1,-2.5)
(-1.7,1.5) --(-2.7,1.5)
(-1.7,-1.5) --(-2.7,-1.5)
(2.7,1.5) --(3.7,1.5)
(2.7,-1.5) --(3.7,-1.5)
;

\draw[thick,red]
\foreach \i in {-1,1} {
(\i,.5)--(\i+1,.5)
(\i,-.5)--(\i+1,-.5)}
(0,1.5) --(-.7,2.5)
(0,2.5) --(-.7,3.5)
(1,1.5) --(1.7,2.5)
(1,2.5) --(1.7,3.5)
(0,-1.5) --(-.7,-2.5)
(0,-2.5) --(-.7,-3.5)
(1,-1.5) --(1.7,-2.5)
(1,-2.5) --(1.7,-3.5)
;

\filldraw 
(0,0.5) circle[radius=0.035] 
(1,0.5) circle[radius=0.035] 
(2,0.5) circle[radius=0.035] 
(3,0.5) circle[radius=0.035] 
(-1,0.5) circle[radius=0.035] 
(-2,0.5) circle[radius=0.035]

(0,-0.5) circle[radius=0.035] 
(1,-0.5) circle[radius=0.035] 
(2,-0.5) circle[radius=0.035] 
(3,-0.5) circle[radius=0.035] 
(-1,-0.5) circle[radius=0.035]
(-2,-0.5) circle[radius=0.035]

(0,1.5)  circle[radius=0.035]
(0,2.5)  circle[radius=0.035]
(1,1.5)  circle[radius=0.035]
(1,2.5) circle[radius=0.035]
(0,-1.5)  circle[radius=0.035]
(0,-2.5) circle[radius=0.035]
(1,-1.5)  circle[radius=0.035]
(1,-2.5) circle[radius=0.035]
(-.7,2.5) circle[radius=0.035]
(-.7,3.5) circle[radius=0.035]
(-.7,-2.5) circle[radius=0.035]
(-.7,-3.5) circle[radius=0.035]
(1.7,2.5) circle[radius=0.035]
(1.7,3.5) circle[radius=0.035]
(1.7,-2.5) circle[radius=0.035]
(1.7,-3.5) circle[radius=0.035]

(2,.5) circle[radius=0.035]
(2.7,1.5) circle[radius=0.035]
(3,.5) circle[radius=0.035]
(3.7,1.5) circle[radius=0.035]
(2,-.5) circle[radius=0.035]
(2.7,-1.5) circle[radius=0.035]
(3,-.5) circle[radius=0.035]
(3.7,-1.5) circle[radius=0.035]
(-1.7,1.5) circle[radius=0.035]
(-2.7,1.5) circle[radius=0.035]
(-1.7,-1.5) circle[radius=0.035]
(-2.7,-1.5) circle[radius=0.035]
;
\end{tikzpicture}
\caption{Cayley graph of $W_{\Gamma}$, for $\Gamma=P_4$.}
\label{fig_cayleygraph_P4}
\end{figure}
and has countable automorphism group by Proposition~\ref{prop_countable}.

This should be compared with the case of right-angled Artin groups \cite{T}, where this behaviour is not present.

\section{Beyond locally finite graphs}\label{section6}
Even though the main aim of this note was to classify those finitely generated Coxeter groups whose Cayley graph (with respect to the standard generating set) has a nondiscrete automorphism group, the tools we developed to do so allow us to construct explicit examples of vertex-transitive graphs of infinite degree that still have locally compact automorphism groups. In particular in Example \ref{weird example} gives construction of a graph with compact vertex stabilisers and Example \ref{weirder example} given an explicit construction of a graph such that edge stabilisers are compact-open but vertex stabilisers are not compact.

Before we proceed, let us recall some standard notation. Given a simplicial graph $\Gamma = (V\Gamma, E\Gamma)$, we will use $\Gamma^c$ to denote the \emph{complement graph} of $\Gamma$, i.e $V\Gamma^C = V\Gamma$ and $E\Gamma^c = \binom{V\Gamma}{2} \setminus E\Gamma$. Quite clearly, $\Aut(\Gamma^c)$ is naturally isomorphic to $\Aut(\Gamma)$ in an obvious way: $\phi \in \Sym(V\Gamma)$ is an automorphism of $\Gamma$ is and only if it is an automorphism of $\Gamma^c$.
\begin{lemma}
    \label{lemma:tree_complements}
    Suppose that $\Gamma$ is an infinite tree without leaves. Then $\Gamma^c$ does not admit a good separating set.  
\end{lemma}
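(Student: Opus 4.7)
The plan is to invoke (the easy direction of) Lemma~\ref{char_good_sep_set}, which reduces the problem to a local statement about fixed automorphisms. By that lemma, if $\Gamma^c$ admitted a good separating set, then there would exist a vertex $v$ and a non-trivial $\alpha \in \Aut(\Gamma^c)$ with $\alpha\restriction_{\star_{\Gamma^c}(v)}=\id$. Since $\Aut(\Gamma^c)=\Aut(\Gamma)$ and $\star_{\Gamma^c}(v) = V\Gamma \setminus N_\Gamma(v)$, this translates into the statement: there exists a vertex $v\in V\Gamma$ and a non-trivial $\alpha\in\Aut(\Gamma)$ fixing every vertex of $\Gamma$ outside $N_\Gamma(v)$. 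I would then show this is impossible when $\Gamma$ is a leafless infinite tree.

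Assume such $\alpha$ and $v$ exist. Being a bijection that fixes $V\Gamma \setminus N_\Gamma(v)$ pointwise, the automorphism $\alpha$ must permute $N_\Gamma(v)$, and it does so non-trivially (otherwise $\alpha=\id$). Pick $w \in N_\Gamma(v)$ with $w':=\alpha(w)\neq w$. Since $\Gamma$ has no leaves, $w$ has some neighbor $x\neq v$. Because $\Gamma$ is a tree, it contains no triangle, and so $x\notin N_\Gamma(v)$; consequently $\alpha(x)=x$. Since $\alpha\in\Aut(\Gamma)$ sends the edge $\{w,x\}$ to the edge $\{w',x\}$, we deduce that $x$ is adjacent to both $w$ and $w'$. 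Together with $v$, the vertices $v,w,x,w'$ then form an embedded $4$-cycle in $\Gamma$, contradicting the tree hypothesis. Hence no such $\alpha$ exists and $\Gamma^c$ admits no good separating set.

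The main subtlety is that Lemma~\ref{char_good_sep_set} is stated for finite graphs, whereas here $\Gamma$ (and hence $\Gamma^c$) is infinite. I would handle this by observing that the implication used is really just a direct unpacking of Definition~\ref{goodseparatingset}: given a good separating set $S\subsetneq \Gamma^c$ with an associated non-trivial $\alpha\in\Aut(\Gamma_1^c)$ fixing $S$ pointwise, one extends $\alpha$ to $\Gamma^c$ by the identity on $\Gamma_2^c\setminus S$, and then any vertex $v\in \Gamma_2^c\setminus S$ witnesses the required condition since all of $\star_{\Gamma^c}(v)$ lies inside $\Gamma_2^c$ and is therefore fixed. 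No finiteness assumption is used in this step, so the reduction is legitimate. The rest of the argument is a clean application of the tree property, so I expect no further obstacles.
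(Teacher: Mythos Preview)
Your proof is correct and follows the same overall reduction as the paper: both invoke Lemma~\ref{char_good_sep_set} (using $\Aut(\Gamma^c)=\Aut(\Gamma)$ and $\star_{\Gamma^c}(v)=V\Gamma\setminus\link_\Gamma(v)$) to reduce to showing that no non-trivial $\alpha\in\Aut(\Gamma)$ can fix every vertex outside the $\Gamma$-neighbourhood of some $v$. The difference is in how the contradiction is obtained. The paper argues via the unique-path property: each neighbour $u$ of $v$ has a further neighbour $u'\notin\star_\Gamma(v)$, and since $u$ lies on the unique path from $v$ (which one first shows is fixed) to the fixed vertex $u'$, it must be fixed too, forcing $\alpha=\id$. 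You instead pick a moved neighbour $w$, find a fixed vertex $x\sim w$ outside $N_\Gamma(v)$, and exhibit the $4$-cycle $v,w,x,\alpha(w)$. Your route is a touch more direct; the paper's has the mild advantage of explicitly identifying the full fixed-point set. Your explicit justification that the relevant implication of Lemma~\ref{char_good_sep_set} does not need finiteness is a point the paper uses silently, so it is good that you spelled it out.
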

\begin{proof}
    We will use Lemma \ref{char_good_sep_set}.
    
    Let $v \in V\Gamma$ and suppose that $\alpha \in \Aut(\Gamma)$ is a automorphism such that $\alpha \restriction_{\star_{\Gamma^c}(v)} = \id_\Gamma$. Following the definition of the complement graph, we see that $\star_{\Gamma^c}(v) = V\Gamma \setminus \star_\Gamma(v)$, therefore $\alpha$ must fix all vertices of the tree $\Gamma$ outside of $V\Gamma \setminus \star_\Gamma(v)$. Since $\Gamma$ does not have any leaves, we see that $v$ is adjacent to at least two distinct vertices, denote them $u_1, u_2$. By a similar argument, we see that $u_1$ must be adjacent to some vertex $u_1'$ outside of $\star_\Gamma(v)$ and $u_2$ must be adjacent to some $u_2'$ outside of $\star_\Gamma(v)$. By assumption, $u_1'$ and $u_2'$ are fixed by $\alpha$. Since $\Gamma$ is a tree, the sequence of vertices $(u_1', u_1, v, u_2, u_2')$ is the unique path between $u_1'$ and $u_2'$ and therefore the vertices $u_1, v, u_2$ must be fixed by $\alpha$ as well. Since $\Gamma$ does not have any leaves, we see that if $u \in \star_\Gamma(v)$ is a neighbour of $v$ it must be adjacent to some $u' \in V\Gamma \setminus \star_\Gamma(v)$. Again, $u'$ must be fixed by $\alpha$ by assumption and therefore, since $(v, u, u')$ is the unique path from $v$ to $u'$, we see that $u$ must be fixed as well. It then follows that $\alpha = \id_\Gamma$.

    Therefore, for every vertex $v \in V\Gamma$ and every $\alpha \in \Aut(\Gamma) \setminus \id_\Gamma$ we have that $\alpha \restriction_{\star_\Gamma(v)} \neq \id_\Gamma$, and thus, by Lemma \ref{char_good_sep_set}, $\Gamma$ does not admit a good separating set. 
\end{proof}

 The main idea behind the constructions presented in this section relies on the following application of Proposition \ref{prop_countable}
\begin{lemma}
\label{lemma:rigid_stabilisers}
    Suppose that $\Gamma$ is a simplicial graph which does not admit a good separating set. Then for any $v \in W_{\Gamma}$, the stabiliser $\stab_{\A}(v)$, where $\A = \Aut(\Cay(W_{\Gamma}, V\Gamma))$ is isomorphic to $\Aut(\Gamma)$ as a topological group.
\end{lemma}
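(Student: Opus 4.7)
The plan is to reduce to showing that the subgroup $\Aut(\Gamma) \leq \A$ from Lemma \ref{autGamma_inclusion} carries the same topology whether viewed intrinsically (with its permutation topology on $V\Gamma$) or as a subspace of $\A$ (with the subspace topology inherited from the permutation topology on $W_\Gamma$). Once this is settled, I will transport the result to an arbitrary $v \in W_\Gamma$ by conjugation.

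First, since $\Gamma$ admits no good separating set, Proposition \ref{prop_countable} yields $\A = W_\Gamma \rtimes \Aut(\Gamma)$, where $\Aut(\Gamma)$ is identified with the almost translations fixing $e$. The observation after Lemma \ref{autGamma_inclusion} writes every $\alpha \in \stab_\A(v)$ uniquely as $\alpha = L_v \beta L_{v^{-1}}$ with $\beta \in \stab_\A(e) = \Aut(\Gamma)$. Because $\A$ is a topological group, conjugation by $L_v$ is a self-homeomorphism of $\A$ that restricts to a topological group isomorphism $\stab_\A(e) \to \stab_\A(v)$; hence it is enough to prove the claim for $v = e$.

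To show that the inclusion $\Aut(\Gamma) \hookrightarrow \stab_\A(e)$ is a homeomorphism, I will verify continuity at the identity in both directions, which suffices since both groups are topological. For continuity into $\stab_\A(e)$, take a basic open neighbourhood $\stab_\A(F) \cap \Aut(\Gamma)$ with finite $F \subset W_\Gamma$, and put $F' := \bigcup_{w \in F} \supp(w) \subset V\Gamma$, where $\supp(w)$ is the set of generators appearing in any reduced expression for $w$; this is well-defined because Theorem \ref{Tits_theorem} ensures braid moves preserve the generators used. The extension constructed in Lemma \ref{autGamma_inclusion} sends $x_1 \cdots x_n \mapsto \sigma(x_1)\cdots\sigma(x_n)$ on any reduced expression, so any $\sigma$ fixing $F'$ pointwise induces an almost translation fixing $F$ pointwise. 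Conversely, given a basic open set $\{\sigma \in \Aut(\Gamma) : \sigma\restriction_{F'} = \id_\Gamma\}$ with $F' \subset V\Gamma$ finite, I simply take $F := F'$ regarded as a finite subset of $W_\Gamma$: if the almost translation $\alpha_\sigma$ fixes each $x \in F'$ as a vertex of $\C_\Gamma$, then $\sigma(x) = \alpha_\sigma(x) = x$ for all $x \in F'$, as desired.

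The only real technical point is the well-definedness of $\supp(w)$, which is a standard consequence of Theorem \ref{Tits_theorem} and is not in my view a true obstacle; the remainder is a direct unpacking of definitions, once Proposition \ref{prop_countable} has collapsed $\A$ to $W_\Gamma \rtimes \Aut(\Gamma)$ and made every stabiliser a set of almost translations.
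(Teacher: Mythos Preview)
Your proof is correct and follows essentially the same approach as the paper: both invoke Proposition~\ref{prop_countable} to identify $\stab_\A(v)$ with $\Aut(\Gamma)$ and then verify that the identification is a homeomorphism by matching basic open neighbourhoods of the identity. The only cosmetic differences are that you first reduce to $v=e$ by conjugation (the paper works directly at an arbitrary $v$), and that you handle a general finite $F\subset W_\Gamma$ via $\supp(w)$, whereas the paper observes that almost translations are determined by their action on $B(v,1)$ and so replaces $F$ by a subset of the $1$-ball---both routes reach the same conclusion.
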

\begin{proof}
    As the graph $\Gamma$ does not admit a good separating set, we see that $\A_\Gamma \simeq W_\Gamma \rtimes \Aut(\Gamma)$ by Proposition \ref{prop_countable}. In particular, we see that for any $v \in W_\Gamma$ we have that $\stab_{\A}(v)$ is isomorphic to $\Aut(\Gamma)$ as a group. It remains to show that the isomorphism is continuous.
    
    If the graph $\Gamma$ is finite, then both $\stab_{\A}(v)$ and $\Aut(\Gamma)$ are finite and therefore discrete. It follows trivially that $\stab_{\A}(v)$ and $\Aut(\Gamma)$ are isomorphic as topological groups as well.

    Let $v \in W_{\Gamma}$ be arbitrary and let $\phi \colon \Aut(\Gamma) \to \stab_{\A}(v)$ be the map sending $\sigma \in \Aut(\Gamma)$ to $\alpha_{\sigma}$, where $\alpha_{\sigma}$ is the unique element of $\stab_{\A}(v)$ such that $\sigma(\alpha_{\sigma}, u) = \sigma$ for all $u\in W_{\Gamma}$. Let $\mathcal{O} \subseteq \stab_{\A}(v)$ be open. Without loss of generality we may assume that there are $\beta \in \stab_{\A}(v)$ and a finite set $F \subseteq W_{\Gamma}$ such that
    \begin{displaymath}
        \mathcal{O} = \beta \stab_A(F) \cap \stab_{\A}(v).
    \end{displaymath}
    In fact, since the elements of $\stab_{\A}(v)$ are fully determined by their action on the neighbourhood of $v$, we may assume that $F \subseteq B(1, v)$. This means that there are $x_1, \dots, x_n \in V\Gamma$ such that $F = \{vx_1, \dots, vx_n\}$ and, clearly,
    \begin{displaymath}
        \phi^{-1}(\mathcal{O}) = \{ \sigma' \in \Aut(\Gamma) \mid \sigma'\restriction_{F'} = \sigma(\beta, v)\restriction_{F'}\},
    \end{displaymath}
    where $F' = \{x_1, \dots, x_n\} \subseteq V\Gamma$, so we see that $\phi^{-1}(\mathcal{O})$ is open in the permutation topology on $\Aut(\Gamma)$ and therefore $\phi$ is continuous. The proof that $\phi^{-1}$ is continuous is essentially the same and is left as an exercise.

    We see that $\phi \colon \Aut(\Gamma) \to \stab_{\A}(v)$ is both a group isomorphism and a homeomorphism of topological spaces, therefore $\Aut(\Gamma)$ and $\stab_{\A}(v)$ are isomorphic as topological groups.
\end{proof}

We now use Lemma \ref{lemma:tree_complements} and Lemma \ref{lemma:rigid_stabilisers} to give explicit constructions of vertex transitive graphs of infinite degree with locally compact group of automorphisms.
\begin{example}
    \label{weird example}
    Suppose that $\Gamma$ is an infinite regular rooted tree of finite degree. Then $\Cay(W_{\Gamma^c}, V\Gamma^c)$ is a vertex-transitive graph of infinite degree such that $\A = \Aut(\Cay(W_{\Gamma^c}, V\Gamma^c))$ is locally compact group with compact vertex-stabilisers. In particular, for any $w \in W_{\Gamma^c}$ we see that $\stab_{\A}(w)$ is isomorphic (as a topological group) to $\Aut(\mathcal{T}_d)$.
\end{example}
\begin{example}
    \label{weirder example}
    Suppose that $\Gamma = \mathcal{T}_d$ is an infinite $d$-regular tree. Then $\Cay(W_{\Gamma^c}, V\Gamma^c)$ is a vertex-transitive graph of infinite degree such that $\A = \Aut(\Cay(W_{\Gamma^c}, V\Gamma^c))$ is locally compact group such that edge stabilisers are compact but vertex stabilisers are not. In particular, for any $w \in W_{\Gamma^c}$ and $x \in V\Gamma^c$ we see that $\stab_{\A}(w)$ is isomorphic (as a topological group) to $\Aut(\mathcal{T}_d)$ and $\stab_{\A}(\{w,wx\})$ is isomorphic to a vertex stabiliser in $\Aut(\mathcal{T}_d)$, i.e. $\Aut(T_{d, d-1}^*)$, where $T_{d, d-1}^*$ is a infinite rooted tree such that the root has $d$ children and every other vertex has exactly $d-1$ children.
\end{example}


\bibliographystyle{plain}
\bibliography{references}

\end{document}